\newcommand{\op} {\overline{\partial}}
\newcommand{\dbar}{\ensuremath{\overline\partial}}
\newcommand{\C}{\ensuremath{\mathbb{C}}}
\newcommand{\R}{\ensuremath{\mathbb{R}}}
\newcommand{\B}{\ensuremath{\mathbb{B}}}
\newcommand{\sumprime}{\if@display\sideset{}{'}\sum%
            \else\sum'\fi}
\begin{document}

\numberwithin{equation}{section}

\newtheorem{theorem}{Theorem}[section]
\newtheorem{proposition}[theorem]{Proposition}
\newtheorem{conjecture}[theorem]{Conjecture}
\def\theconjecture{\unskip}
\newtheorem{corollary}[theorem]{Corollary}
\newtheorem{lemma}[theorem]{Lemma}
\newtheorem{observation}[theorem]{Observation}
\newtheorem{definition}{Definition}
\numberwithin{definition}{section} 
\newtheorem{remark}{Remark}
\def\theremark{\unskip}
\newtheorem{kl}{Key Lemma}
\def\thekl{\unskip}
\newtheorem{question}{Question}
\def\thequestion{\unskip}
\newtheorem{example}{Example}
\def\theexample{\unskip}
\newtheorem{problem}{Problem}

\thanks{Research supported by the Key Program of NSFC No. 11031008.}

\address{Department of Mathematics, Tongji University, Shanghai, 200092, China}
\email{1113xuwang@tongji.edu.cn}

\title[Variation of Bergman kernels]{Variation of Bergman kernels of pseudoconvex domains}
 \author{Xu Wang}
\date{}
\maketitle

\begin{abstract} We shall give a variational formula of the full Bergman kernels associated to a family of smoothly bounded strongly pseudoconvex domains. An equivalent criterion for the triviality of holomorphic motions of planar domains in terms of the Bergman kernel is given as an application.
\bigskip

\noindent{{\sc Mathematics Subject Classification} (2010): 32A25, 32L25, 32G05.}

\smallskip

\noindent{{\sc Keywords}: Bergman kernel, variational formula, $\op$-equation, curvature property, plurisubharmonic function, holomorphic motion, geodesic curvature.}
\end{abstract}

\tableofcontents

\section{Introduction}

This paper is an attempt to generalize Berndtsson's result on the plurisubharmonicity property of the Bergman kernel. Let $\phi$ be a plurisubharmonic function on a pseudoconvex domain $D$ in $\C^m_t \times \C^n_\zeta$. Denote by $K^t(\zeta,\bar \eta)$ the associated weighted (full) Bergman kernel on fibre $D_t$. Berndtsson (see Theorem 1.1 in \cite{Bern06}) proved that $\log K^t(\zeta,\bar \zeta)$ is plurisubharmonic on $D$ (see also \cite{Maitani84} and \cite{MY04} for eary results in this direction). The most important ingredient in his proof is a curvature property (denote by \textbf{C}) on product domain (see \cite{Bern09},\cite{Bern09a},\cite{Bern13} and \cite{BernPaun08} for applications). Our start point is to translate \textbf{C} to plurisubharmonicity property of the Bergman projection. Since Bergman projection has extremal property, the approximation technique in \cite{Bern06} (i.e., from product case to general case) still applies. Thus we get plurisubharmonicity property of the Bergman projection for general $D$ and $\phi$. Then plurisubharmonicity property of the Bergman kernel can be seen as as a special case (i.e., one point evaluation). By virtue of this observation, it is more natural to study variation of full Bergman kernels $K^t(\zeta,\bar \eta)$ (not only Bergman kernels on the diagonal), which is the theme of this paper. In order to get the second order variation formula for the Bergman kernel, it is necessary to find a $(1,0)$- vector field such that the associated representative of the Kodaira-Spencer class is primitive with respect to some complete K\"ahler metric. We shall show that how to use quasi-K\"ahler-Einstein metric to construct such vector field.

It is a pleasure to thank Bo Berndtsson for inspiring discussions on curvature properties of direct image sheaves. I am also grateful to Bo-Yong Chen for introducing me this topic.

\section{Basic notions and results}

Denote by $\mathbb B$ the unit ball in $\C^m$. Let $\{D_t\}_{t\in\mathbb B}$ be a family of smoothly bounded strongly pseudoconvex domains in $\C^n$. Assume that
\begin{equation*}
D:=\{(t,\zeta)\in\C^{m+n}: \zeta\in D_t, \ t\in\mathbb B\}
\end{equation*}
possesses a smooth defining function $\rho$ such that $\rho^t:=\rho|_{D_t}\in C^{\infty}(\overline{D_t})$ is a strictly plurisubharmonic defining function of $D_t$ for every $t$ in $D$.

Let $\phi$ be a smooth real function such that $\phi$ is smooth on a neighborhood of $\bar D\cap(\mathbb B\times \C^n)$. Put
\begin{equation*}
    \mathcal H_t:=\{f\in H^0(D_t,\wedge^nT^*D_t):~ i^{n^2}\int_{D_t}f\wedge\bar fe^{-\phi}<\infty\}.
\end{equation*}
Denote by $K^t(\zeta,\bar\eta)d\zeta\wedge\overline{d\eta}$ the Bergman kernel of $\mathcal H_t$, where $d\zeta$ is short for $d\zeta^1\wedge\cdots\wedge d\zeta^n$. Using Hamilton's theory on families of non-coercive boundary value problems (see \cite{Hamilton77}, \cite{Hamilton79} or \cite{GreeneK82}), one may prove that:

\begin{lemma}\label{le:stability}
For every fixed $\eta\in D_{t_0}$, $t_0\in \mathbb B$, $K^t(\zeta,\bar\eta)$ is smooth (as a function of $t_j,\bar t_k, \zeta_\alpha$) on a neighborhood of $\bar D\cap(U_{t_0}\times \C^n)$, where $U_{t_0}$ is a neighborhood of $t_0$.
\end{lemma}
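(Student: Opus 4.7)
The strategy is to reduce the statement to smooth dependence of a family of weighted Bergman kernels on the fixed domain $D_{t_0}$, invoke Hamilton's theorem on non-coercive boundary value problems in families, and then extract the fibrewise kernel from the Bergman projection.

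Step 1 (Pullback to a fixed domain). Because $\rho$ is smooth and each $\rho^t$ is strictly plurisubharmonic, the outward normal flow of $\rho^t$ yields, for $t$ in a small neighborhood $U_{t_0}$ of $t_0$, a smooth family of diffeomorphisms $\Psi_t\colon \overline{D_{t_0}} \to \overline{D_t}$ with $\Psi_{t_0} = \mathrm{id}$. Pulling back along $\Psi_t$ turns the standard complex structure on $D_t$ into a smoothly $t$-varying almost complex structure $J_t$ on $D_{t_0}$, and identifies $\mathcal H_t$ isometrically with the space $\tilde{\mathcal H}^t$ of $J_t$-holomorphic $(n,0)$-forms on $D_{t_0}$ that are square-integrable against $e^{-\tilde\phi^t}$, where $\tilde\phi^t := \phi \circ \Psi_t - \log|\det d\Psi_t|^2$. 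The two kernels are related by explicit smooth nowhere-vanishing factors in $\Psi_t$ and its Jacobian, so it suffices to show that the pulled-back kernel $\tilde K^t(w,\bar v_0)$ is smooth in $(t,w) \in U_{t_0} \times \overline{D_{t_0}}$ for fixed $v_0 \in D_{t_0}$.

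Step 2 (Hamilton's theorem). The weighted \dbar-Neumann Laplacian $\Box^t$ on $J_t$-$(n,1)$-forms is a smoothly $t$-parameterized family of subelliptic, non-coercive boundary value problems on the fixed strongly pseudoconvex domain $D_{t_0}$. Hamilton's theorem on the $C^\infty$ dependence of such families (\cite{Hamilton77, Hamilton79}; cf.\ \cite{GreeneK82}) yields that the Neumann operator $N^t$ sends smooth $t$-families of smooth forms on $\overline{D_{t_0}}$ to smooth $t$-families; hence by Kohn's formula $\tilde P^t = I - \dbarstar N^t \dbar$ the same holds for the Bergman projection $\tilde P^t$, and $(t,w) \mapsto \tilde P^t(\chi)(w)$ is $C^\infty$ on $U_{t_0} \times \overline{D_{t_0}}$ for every smooth test form $\chi$.

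Step 3 (Extraction of the fibrewise kernel). To recover $\tilde K^t(\cdot,\bar v_0)$ at the fixed interior point $v_0$, I approximate the formal identity $\tilde K^t(\cdot,\bar v_0) = e^{\tilde\phi^t(v_0)}\, \tilde P^t(\delta_{v_0})$ by smooth compactly supported bumps $\chi_{\varepsilon,t}$ on a small ball around $v_0$, chosen so that $\chi_{\varepsilon,t}\, e^{-\tilde\phi^t} \to \delta_{v_0}$ distributionally as $\varepsilon \to 0$ (with smoothly $t$-varying normalization). Step 2 applied to this smooth $t$-family gives smoothness of $\tilde P^t(\chi_{\varepsilon,t})$ in $(t,w)$ for each $\varepsilon$, and the uniform-in-$t$ subelliptic estimates upgrade the pointwise convergence $\tilde P^t(\chi_{\varepsilon,t}) \to \tilde K^t(\cdot,\bar v_0)$ to $C^\infty$-convergence on $U_{t_0} \times \overline{D_{t_0}}$; the limit, and hence $\tilde K^t(\cdot,\bar v_0)$, is therefore smooth there. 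The main obstacle is Step 2: because the \dbar-Neumann problem is subelliptic but not coercive, parametrized elliptic regularity of Kodaira--Spencer type does not apply, and one must rely on Hamilton's sharper theorem, tailored exactly to this class of non-coercive boundary value problems, to obtain joint $C^\infty$-control of $N^t$ up to $\partial D_{t_0}$ smoothly in the parameter $t$.
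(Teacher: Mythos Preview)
Your approach is essentially the same as the paper's own (explicitly ``informal'') argument: both pull back to a fixed reference fibre via a smooth trivialization and then invoke Hamilton's theory of families of non-coercive boundary value problems \cite{Hamilton77,Hamilton79} to obtain smooth $t$-dependence of the $\dbar$-Neumann and Bergman operators. The paper adds one point of emphasis you omit: it explains \emph{why} a naive perturbation argument via $\|\square^t(u^t-u^s)\|=\|f^t-f^s-(\square^t-\square^s)u^s\|$ fails (the domain of $\square^t$ varies with $t$, so $u^s$ need not lie in it), and that Hamilton's cure is to pass to the \emph{full} system
\[
\mathfrak S^t=\bigl(\widetilde{\square^t},\,(\dbar^t\rho)\vee,\,(\dbar^t\rho)\vee\dbar^t\bigr),
\]
whose domain $C^\infty_{\bullet,\bullet}(\overline{D_0})$ is $t$-independent after trivialization, so that the uniform basic estimates of \cite{Hamilton79} apply directly. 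You cite Hamilton as a black box at exactly this point; that is acceptable, but the paper's discussion is what makes the citation honest.

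There is, however, a genuine gap in your Step~3. Subelliptic estimates control $\tilde P^t u$ in $H^s$ by $u$ in $H^s$ (with at best a fixed finite gain), but your bumps $\chi_{\varepsilon,t}$ blow up in every positive Sobolev norm as $\varepsilon\to 0$; uniform-in-$t$ subellipticity therefore does \emph{not} by itself upgrade the convergence $\tilde P^t(\chi_{\varepsilon,t})\to \tilde K^t(\cdot,\bar v_0)$ to $C^\infty$ on $U_{t_0}\times\overline{D_{t_0}}$. The paper avoids this limiting procedure altogether by invoking Lemma~2.1 of \cite{Bern06}, which represents the fibrewise Bergman kernel at an interior point through the $L^2$-minimal solution of a $\dbar^t$-equation whose right-hand side is compactly supported, smooth, and depends smoothly on $(t,\eta)$; Hamilton's uniform estimates then apply to a \emph{fixed} smooth family of data, with no $\varepsilon\to 0$ needed. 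You should replace your Step~3 by such a representation (or, equivalently, by Kerzman's argument that writes $\tilde K^t(\cdot,\bar v_0)$ as a compactly supported smooth cutoff of itself plus $\dbarstar N^t$ applied to compactly supported smooth data).
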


Put $\cdot\bar\eta=K^t(\mu,\bar\eta)d\mu$. The above lemma implies that
\begin{equation*}
\cdot\bar\eta_{\bar j}:=\frac{\partial}{\partial \bar t^j}K^t(\mu,\bar\eta)d\mu\in \mathcal H_t,
\end{equation*}
for every fixed $\eta\in D_t$. By reproducing property of the Bergman kernels, we have
\begin{equation}\label{eq:reproducing-first}
    K_{\bar j}(\zeta,\bar\eta)=\langle\langle \cdot\bar\eta_{\bar j}, \cdot\bar\zeta\rangle\rangle, \ \
    K_{j}(\zeta,\bar\eta)=\langle\langle \cdot\bar\eta, \cdot\bar\zeta_{\bar j}\rangle\rangle.
\end{equation}

There is another way to compute $K_{j}(\zeta,\bar\eta)$ (without using reproducing property), i.e., to compute
\begin{equation*}
    \frac{\partial}{\partial t^j}\int_{D_t}\{\cdot\bar\eta,\cdot\bar\eta\},
\end{equation*}
where $\{\cdot,\cdot\}:=\cdot\wedge\bar\cdot e^{\phi}$ is the canonical sesquilinear pairing. Denote by $P$ the canonical projection from $\mathbb B\times\C^n$ to $\mathbb B$. Put
\begin{equation*}
    \mathcal V_j=\{V_j\in T(\rho) :~ P_* V_j=\partial/\partial t^j\},
\end{equation*}
where $T(\rho)$ is the space of all $(1,0)$-vector field $V$ such that $V$ is smooth on neighborhood of $\bar D\cap(\mathbb B\times \C^n)$ and $V(\rho)$ vanishes on $\partial D\cap(\mathbb B\times \C^n)$. Denote by $i_t$ the inclusion mapping $D_t\hookrightarrow D$. Since vector fields in $T(\rho)$ are tangent to the boundary, we can prove that
\begin{equation}\label{eq:Lie-first}
    \frac{\partial}{\partial t^j}\int_{D_t}\cdot=\int_{D_t}L_{V_j}\cdot=\int_{D_t}L^t_{V_j}\cdot,\ \ \forall \ V_j\in \mathcal V_j.
\end{equation}
where $L_{V_j}$ is the usual Lie-derivative and $L^t_{V_j}:=i_t^*L_{V_j}$. Thus we have
\begin{equation*}
    K_{j}(\zeta,\bar\eta)=i^{n^2}\int_{D_t} L_{V_j}\{\cdot\bar\eta,\cdot\bar\zeta\}.
\end{equation*}
By Cartan's formula
\begin{equation*}
    L_{V_j}=d\delta_{V_j}+\delta_{V_j}d, \ \delta_{V_j} \ \text{means contraction of a form with} \ V_j,
\end{equation*}
thus we have
\begin{equation*}
    K_{j}(\zeta,\bar\eta)=i^{n^2}\int_{\partial D_t} \delta_{V_j}\{\cdot\bar\eta,\cdot\bar\zeta\}+
    i^{n^2}\int_{D_t}  \frac{\partial}{\partial t^j}\{\cdot\bar\eta,\cdot\bar\zeta\}.
\end{equation*}
By \eqref{eq:reproducing-first}, the second term of the right hand side of the above equality is
\begin{equation*}
     2K_{j}(\zeta,\bar\eta)-i^{n^2}\int_{D_t}  \phi_j\{\cdot\bar\eta,\cdot\bar\zeta\},
\end{equation*}
thus we get Hadamard's first variational formula
\begin{equation}\label{eq:variation-first}
    K_{j}(\zeta,\bar\eta)=i^{n^2}\int_{D_t}  \phi_j\{\cdot\bar\eta,\cdot\bar\zeta\}
    -i^{n^2}\int_{\partial D_t} \delta_{V_j}\{\cdot\bar\eta,\cdot\bar\zeta\},
\end{equation}
which is due to Komatsu \cite{Komatsu82} (in case $\phi\equiv0$).

Notice that
\begin{equation*}
    \mathcal L_{V_j}^t\{\cdot,\cdot\}=\{L_j\cdot,\cdot\}+\{\cdot,L_{\bar j}\cdot\},
\end{equation*}
where $L_j$ (resp. $L_{\bar j}$) are short for $e^\phi L_{V_j}^t(e^{-\phi})$ (resp. $L_{\bar V_j}^t$). Since $L_{\bar j}\cdot\bar\eta=\cdot\bar\eta_{\bar j}$, \eqref{eq:reproducing-first} implies that $L_j\cdot\bar\eta ~\bot~\mathcal H_t$. Thus we have
\begin{equation*}
    0=\langle\langle L_j\cdot\bar\eta, \cdot\bar\zeta \rangle\rangle_{\bar k}=i^{n^2}\int_{D_t}\{L_j\cdot\bar\eta,L_k\cdot\bar\zeta\}+\{L_{\bar k}L_j\cdot\bar\eta,\cdot\bar\zeta\},
\end{equation*}
which implies that
\begin{equation}\label{eq:variation-second}
    K_{j\bar k}(\zeta,\bar\eta)=\langle\langle \cdot\bar\eta_{\bar k}, \cdot\bar\zeta_{\bar j} \rangle\rangle
    +\langle\langle [L_j,L_{\bar k}]\cdot\bar\eta,\cdot\bar\zeta\rangle\rangle
    -i^{n^2}\int_{D_t}\{L_j\cdot\bar\eta,L_k\cdot\bar\zeta\}.
\end{equation}
Denote by $_j\bar\eta^{n,0}$ (resp. $_j\bar\eta^{n-1,1}$) the $(n,0)$ (resp. $(n-1,1)$) -part of $L_j\cdot\bar\eta$. By Cartan's formula, we have
\begin{equation*}
    _j\bar\eta^{n,0}=i_t^*e^{\phi}(\partial\delta_{V_j}+\delta_{V_j}\partial)(e^{-\phi}\cdot\bar\eta),
    \ \ _j\bar\eta^{n-1,1}=\delta_{\dbar^tV_j}\cdot\bar\eta.
\end{equation*}
In deformation theory (see \cite{KodairaSpencer58}), $\dbar^tV_j$ represent Kodaira-Spencer classes.

Now the third term of the right hand side of \eqref{eq:variation-second} can be written as a Hermitian form
\begin{equation*}
    I_3=-\langle\langle {_j\bar\eta}^{n,0}, {_k\bar\zeta}^{n,0}\rangle\rangle
    -i^{n^2}\int_{D_t}\{{_j\bar\eta}^{n-1,1}, {_k\bar\zeta}^{n-1,1}\}.
\end{equation*}
Since $_j\bar\eta^{n,0}\bot\mathcal H_t$, $_j\bar\eta^{n,0}$ is the $L^2-$minimal solution of
\begin{equation*}
    \dbar^t(\cdot)=\dbar^t({_j\bar\eta^{n,0}}).
\end{equation*}
Notice that
\begin{equation}\label{eq:n0-n11}
    -\dbar^t({_j\bar\eta^{n,0}})=\partial_{\phi}^t({_j\bar\eta^{n-1,1}})+(\dbar^t\phi)_{V_j}\wedge\cdot\bar\eta,
\end{equation}
where $\partial^t_{\phi}:=e^{\phi}\partial^t(e^{-\phi})$ and $(\dbar^t\phi)_{V_j}=\sum (V_j\phi_{\bar\alpha})d\bar\mu^\alpha$. We shall use $L^2$-estimates to decode the positivity of $I_3$. In this way, we need that ${_j\bar\eta^{n-1,1}}$ is \textbf{primitive with respect to some complete K\"ahler metric} on $D_t$.

Primitivity is natural in the sense that
\begin{equation*}
    -i^{n^2}\int_{D_t}\{\cdot,\cdot\}=\langle\langle\cdot,\cdot\rangle\rangle
\end{equation*}
for primitive $(n-1,1)$-form. But if the metric is not complete, by H\"ormander's density lemma (see \cite{Hormander63} and \cite{Hormander65}), we have to estimate
\begin{equation*}
    \langle\langle\partial_{\phi}^t({_j\bar\eta^{n-1,1}}),u\rangle\rangle, \ \ \forall \ u\in {\rm Dom}(\dbar^t)^*\cap\ker\dbar^t\cap C^{\infty}(\overline{D_t}).
\end{equation*}
In order to use
\begin{equation}\label{eq:obstruction}
    \langle\langle\partial_{\phi}^t({_j\bar\eta^{n-1,1}}),u\rangle\rangle=\langle\langle {_j\bar\eta^{n-1,1}}, (\partial^t_{\phi})^*u\rangle\rangle,
\end{equation}
we have to assume that $u\in {\rm Dom}(\partial_{\phi}^t)^*$, which may not be true for $n\geq2$.

Notice that ${_j\bar\eta^{n-1,1}}$ is primitive with respect to K\"ahler form $\omega^t$ if
\begin{equation}\label{eq:primitive-vector}
    \dbar^t(\delta_{V_j}\omega^t)=0.
\end{equation}
Since $\omega^t$ is a K\"ahler metric. \eqref{eq:primitive-vector} is equivalent to the primitivity of the Kodaira-Spencer class. Now it suffices to find $V_j\in\mathcal V_j$ such that $\delta_{V_j}\omega^t$ is $\dbar^t$-closed for some complete
K\"ahler metric $\omega^t$. In general, if there exists $\psi$ such that $i\partial^t\dbar^t\psi=\omega^t$, then the unique solution $V:=V_j^{\psi}$ of
\begin{equation*}
    \delta_{V}\omega^t=-i\dbar^t\psi_j, \ P_*V=\partial/\partial t^j
\end{equation*}
of course satisfies \eqref{eq:primitive-vector}. Now we only need to \textbf{find $\psi$ such that $\omega^t$ is complete and $V_j^{\psi}$ is tangent to the boundary} (i.e., $V_j^{\psi}\in \mathcal V_j$).

By definition, $\{D_t\}$ is a smooth family of pseudoconvex domains with strictly plurisubharmonic definition functions $\rho|_{D_t}$. Thus
\begin{equation*}
    \omega^t:=i\partial^t\dbar^t(-\log-\rho).
\end{equation*}
is a complete K\"ahler metric on $D_t$. We call $\omega^t$ quasi-K\"ahler-Einstein metric on $D_t$. In fact, by Cheng-Yau's result \cite{CY80}, one may choose defining function $\rho_{CY}$ such that $i\partial^t\dbar^t(-\log-\rho_{CY})$ is a complete-K\"ahler-Einstein metric on $D_t$. To our surprise, every quasi-K\"ahler-Einstein metric fits our needs:

\begin{kl} Every $V_j^{-\log-\rho}$ is tangent to the boundary (i.e., $V_j^{-\log-\rho}\in\mathcal V_j$).
\end{kl}

Let's go back to \eqref{eq:variation-second}. Using integration by parts, we shall prove that the second term of the right hand side of \eqref{eq:variation-second} contains a boundary term with density
\begin{equation}\label{eq:k}
    b_{j\bar k}(\rho):=\frac{\langle V_j^{-\log-\rho}, V_k^{-\log-\rho}\rangle_{i\partial\op\rho}}{|\partial\rho|}.
\end{equation}
Denote by $_j\bar\eta$ the $\square'$-harmonic part of $_j\bar\eta^{n-1,1}$, where $\square'$ denotes the $\partial^t_{\phi}$-Laplace. Using our Key Lemma and $L^2$-estimates on complete K\"ahler manifolds \cite{Demailly82}, we can prove that:

\begin{theorem}\label{th:Bergman0} If $\phi\equiv0$ on $D$ then
\begin{equation}\label{eq:Bergman0}
    K_{j\bar k}(\zeta,\bar\eta)
    =\langle\langle \cdot\bar\eta_{\bar k}, \cdot\bar\zeta_{\bar j}\rangle\rangle
    +\int_{\partial D_t} b_{j\bar k}(\rho) \langle \cdot\bar\eta, \cdot\bar\zeta\rangle d \sigma
 + \langle\langle {_j\bar\eta}, {_k\bar\zeta}\rangle\rangle,
\end{equation}
where $d \sigma$ is the surface measure on $\partial D_t$ and $\langle \cdot\bar\eta, \cdot\bar\zeta\rangle:=K^t(\mu,\bar\eta)\overline{K^t(\mu,\bar\zeta)}e^{-\phi^t(\mu)}$.
\end{theorem}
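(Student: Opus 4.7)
The plan is to start from the second-variation formula \eqref{eq:variation-second}, whose third term we rewrite as the Hermitian expression $I_3$ introduced in the excerpt, and to evaluate the remaining two pieces under the distinguished choice $V_j := V_j^{-\log-\rho}$ furnished by the Key Lemma. This choice serves two indispensable purposes at once: $V_j \in \mathcal V_j$, so the Lie-derivative calculation leading to \eqref{eq:variation-second} is legitimate and any emergent boundary integral is well defined; and the Kodaira--Spencer representative $_j\bar\eta^{n-1,1}$ is primitive with respect to the complete K\"ahler metric $\omega^t = i\partial^t\dbar^t(-\log-\rho)$, which unlocks the $L^2$-machinery on $(D_t,\omega^t)$.

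Given primitivity, the identity $-i^{n^2}\int_{D_t}\{\alpha,\alpha\} = \langle\langle\alpha,\alpha\rangle\rangle$ for primitive $(n-1,1)$-forms converts $I_3$ into
\begin{equation*}
I_3 = \langle\langle {_j\bar\eta^{n-1,1}}, {_k\bar\zeta^{n-1,1}}\rangle\rangle - \langle\langle {_j\bar\eta^{n,0}}, {_k\bar\zeta^{n,0}}\rangle\rangle.
\end{equation*}
The heart of the argument is to show this simplifies to $\langle\langle {_j\bar\eta}, {_k\bar\zeta}\rangle\rangle$. Polarization reduces the task to the diagonal case $\|{_j\bar\eta^{n-1,1}}\|^2 - \|{_j\bar\eta^{n,0}}\|^2 = \|{_j\bar\eta}\|^2$. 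I would write the orthogonal decomposition ${_j\bar\eta^{n-1,1}} = {_j\bar\eta} + w$ with $w \perp \ker\square'$ in $L^2(D_t,\omega^t)$, invoke the relation $-\dbar^t({_j\bar\eta^{n,0}}) = \partial^t({_j\bar\eta^{n-1,1}}) = \partial^t w$ from \eqref{eq:n0-n11} (using $\phi \equiv 0$) together with the $L^2$-minimality of $_j\bar\eta^{n,0}$, and then apply Demailly's $L^2$-estimates on the complete K\"ahler manifold $(D_t,\omega^t)$, together with H\"ormander's density lemma to circumvent the adjoint-domain obstruction at \eqref{eq:obstruction}, in order to conclude $\|w\|^2 = \|{_j\bar\eta^{n,0}}\|^2$.

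The remaining term $\langle\langle [L_j, L_{\bar k}] \cdot\bar\eta, \cdot\bar\zeta\rangle\rangle$ is the source of the boundary contribution. With $\phi \equiv 0$ we have $[L_j, L_{\bar k}] = L^t_{[V_j, \bar V_k]}$; since $\cdot\bar\eta$ and $\cdot\bar\zeta$ are holomorphic top-degree forms, Cartan's formula reduces this Lie-derivative to an exact form, and Stokes' theorem (in the spirit of \eqref{eq:Lie-first}) turns the pairing into a pure boundary integral. Plugging in the defining identity $\delta_{V_j}\omega^t = -i\dbar^t(-\log-\rho)_j$ and carrying out the asymptotic analysis of $\omega^t$ near $\partial D_t$ identifies the boundary density as precisely $b_{j\bar k}(\rho)$ of \eqref{eq:k}. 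Assembling the three pieces yields \eqref{eq:Bergman0}.

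The main obstacle will be the middle paragraph: rigorously establishing the $L^2$-identity $\|w\|^2 = \|{_j\bar\eta^{n,0}}\|^2$ on the noncompact, complete-K\"ahler manifold $(D_t,\omega^t)$. Closed range for $\square'$ is not automatic; one must approximate via Demailly-style cutoffs, and the interaction of $(\partial^t)^*$ with the noncompact end --- exactly the obstruction flagged at \eqref{eq:obstruction} --- must be handled with care. Navigating this is the technical heart of the proof.
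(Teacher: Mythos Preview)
Your proposal is correct and follows the same architecture as the paper: start from \eqref{eq:variation-second}, convert the commutator term into the boundary integral (this is Lemma~\ref{le:bdy}), and reduce $I_3$ to the pairing of $\square'$-harmonic parts (this is Lemma~\ref{le:L2-1} and its polarized form \eqref{eq:L2-1}).

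Two places where the paper is cleaner than your outline. First, the worry in your last paragraph is overblown: with $\phi\equiv0$ one has $\square'=\square''$ on the complete K\"ahler manifold $(D_t,\omega^t)$, so the Green operator $G$ for $\square''$ gives $a=\dbar^*G\partial b$ and then $G\partial b=b_2$ directly, yielding $\|a\|^2=\|b\|^2-\|b_1\|^2$ in one line; the obstruction at \eqref{eq:obstruction} simply does not arise once the metric is complete, and no separate closed-range argument is needed. Second, the boundary density $b_{j\bar k}(\rho)$ is not obtained by asymptotic analysis of $\omega^t$: after Stokes, the boundary integrand is $\sum(V\overline{v^\alpha})\rho_{\bar\alpha}/|\partial\rho|$, and the elementary identity $\langle V,V\rangle_{i\partial\dbar\rho}-\sum(V\overline{v^\alpha})\rho_{\bar\alpha}=V\overline V\rho$, together with $V\rho=0$ on $\partial D_t$, identifies it with $b_{j\bar k}(\rho)$. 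The paper even remarks that this step uses only $V_j\in\mathcal V_j$, not the special form of $V_j^{-\log-\rho}$; the latter is needed solely for primitivity in the $I_3$ computation.
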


As a direct corollary, we have:

\begin{corollary}\label{co:nakano} If $\phi\equiv0$ and $D$ is pseudoconvex then $\{\mathcal H_t\}$ is Nakano semi-positive.
\end{corollary}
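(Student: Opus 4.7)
The plan is to deduce the corollary by recognising the right-hand side of \eqref{eq:Bergman0} (with $\phi\equiv 0$) as encoding the curvature of the Hilbert bundle $\{\mathcal H_t\}_{t\in\mathbb B}$ and then showing this curvature is Nakano semi-positive because its two non-trivial pieces are manifestly non-negative. Nakano semi-positivity of $\{\mathcal H_t\}$ at $t_0\in\mathbb B$ is the statement that for every finite family of local holomorphic sections $u_1,\ldots,u_m$ of $\{\mathcal H_t\}$ near $t_0$, the Hermitian form $(j,k)\mapsto\langle\langle\Theta^{\mathcal H}_{j\bar k}u_j,u_k\rangle\rangle$ is positive semi-definite at $t_0$. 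By the standard identity
\[
\sum_{j,k}\partial_j\partial_{\bar k}\langle\langle u_j,u_k\rangle\rangle=\sum_{j,k}\langle\langle\nabla_j u_j,\nabla_k u_k\rangle\rangle+\sum_{j,k}\langle\langle\Theta^{\mathcal H}_{j\bar k}u_j,u_k\rangle\rangle,
\]
it suffices to produce a variational formula for the left-hand side in which the first piece on the right matches the Chern connection squared norm and the rest is $\ge 0$.

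First I would rerun the derivation of Theorem~\ref{th:Bergman0} with $u_j$ in place of $\cdot\bar\eta$ throughout, using the vector fields $V_j=V_j^{-\log-\rho}\in\mathcal V_j$ supplied by the Key Lemma. The same machinery --- splitting $L_j u_j$ into its $(n,0)$-part and primitive $(n-1,1)$-part ${_j u_j^{n-1,1}}=\delta_{\op^t V_j}u_j$, the identity \eqref{eq:n0-n11}, Demailly's $L^2$-estimates on the complete K\"ahler manifold $(D_t,\omega^t)$, and the integration by parts producing the boundary density $b_{j\bar k}(\rho)$ of \eqref{eq:k} --- goes through verbatim, because primitivity depends only on the choice of $V_j$, not on the section. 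Writing $_j u_j$ for the $\square'$-harmonic part of $_j u_j^{n-1,1}$, the outcome should read
\[
\sum_{j,k}\partial_j\partial_{\bar k}\langle\langle u_j,u_k\rangle\rangle\bigg|_{t_0}=\sum_{j,k}\langle\langle\nabla_j u_j,\nabla_k u_k\rangle\rangle+\sum_{j,k}\int_{\partial D_{t_0}}b_{j\bar k}(\rho)\{u_j,u_k\}\,d\sigma+\sum_{j,k}\langle\langle {_j u_j},{_k u_k}\rangle\rangle,
\]
so that after cancellation the Nakano curvature form equals the sum of the last two terms.

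Both of these remaining pieces are non-negative. The $\square'$-harmonic contribution is $\|\sum_j{_j u_j}\|^2\ge 0$ outright. For the boundary piece, write $u_j=f_j\,d\mu$; then at each point of $\partial D_{t_0}$ the integrand equals
$|\partial\rho|^{-1}\,i\partial\op\rho(V_u,\overline{V_u})$
with $V_u:=\sum_j f_j(\mu)\,V_j^{-\log-\rho}$, a $(1,0)$-vector field which is tangent to $\partial D$ by the Key Lemma. Pseudoconvexity of $D$ then gives $i\partial\op\rho(V_u,\overline{V_u})\ge 0$ on such tangent vectors, so the boundary integral is $\ge 0$. Combining the two, $\sum\langle\langle\Theta^{\mathcal H}_{j\bar k}u_j,u_k\rangle\rangle\ge 0$ and Nakano semi-positivity follows.

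The main obstacle is identifying the first piece of the variational formula with the Chern connection squared norm $\sum\langle\langle\nabla_j u_j,\nabla_k u_k\rangle\rangle$ on $\{\mathcal H_t\}$. This identification rests on the fact that the $(1,0)$-part $\nabla_j$ of the Chern connection on the Bergman bundle is realised as the orthogonal projection onto $\mathcal H_t$ of the fibrewise Lie derivative $L^t_{V_j}$, a property inherited from the reproducing property \eqref{eq:reproducing-first} and the extremal/$L^2$-minimal characterisation of the Bergman projection. Once this bookkeeping is verified, the corollary reduces immediately to the two positivity assertions above.
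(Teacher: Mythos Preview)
Your positivity arguments (boundary term via pseudoconvexity applied to the tangent vector $\sum_j f_j V_j^{-\log-\rho}$, harmonic term as a squared norm) are exactly the ones the paper uses, but the route you take to reach them is more roundabout than necessary.

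The paper does \emph{not} rerun the derivation of Theorem~\ref{th:Bergman0} for general holomorphic sections $u_j$. Instead it adopts the Bergman--kernel formulation of Nakano semi-positivity, \eqref{eq:Bergman-Nakano}, as the \emph{definition} in the infinite-rank case, after checking (Lemma~\ref{le:Nakano-Bergman}) that it agrees with the usual one in finite rank. The point is that the sections $\cdot\bar\eta_p$ are special: by the reproducing property, $L_j\cdot\bar\eta_p\perp\mathcal H_t$, so the Chern covariant derivative satisfies $D_j\cdot\bar\eta_p=0$ and hence $\langle\langle\Theta_{j\bar k}\cdot\bar\eta_p,\cdot\bar\eta_q\rangle\rangle=K_{j\bar kp\bar q}$ on the nose. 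Theorem~\ref{th:Bergman0}, already stated for these sections, then yields
\[
\sum c^{jp}\overline{c^{kq}}K_{j\bar kp\bar q}
=\int_{\partial D_t}\frac{i\partial\dbar\rho(W,\overline W)}{|\partial\rho|}\,d\sigma
+\Bigl\|\sum c^{jp}\,{_j\bar\eta_p}\Bigr\|^2,\qquad
W=\sum_j\Bigl(\sum_p c^{jp}K^t(\mu,\bar\eta_p)\Bigr)V_j^{-\log-\rho},
\]
and positivity follows exactly as you argue. No identification of $\nabla_j$ with a projected Lie derivative is needed, and no re-derivation is required.

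Your approach with arbitrary holomorphic sections $u_j$ does \emph{not} go through ``verbatim'': the derivation of \eqref{eq:variation-second} uses $L_j\cdot\bar\eta\perp\mathcal H_t$ (a consequence of \eqref{eq:reproducing-first}) in an essential way, and Lemma~\ref{le:stability} only supplies boundary regularity for the Bergman kernel sections. For a general $u_j$ extra cross terms appear and must be absorbed into the Chern-connection piece, which is the ``main obstacle'' you yourself flag. This can be made to work, but the paper's choice of the special frame $\{\cdot\bar\eta_p\}$ short-circuits all of it.
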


Before we prove it, it is necessary to give a definition of the notion of Nakano positivity for a family of Hilbert spaces with infinite rank.

In finite dimensional (compact fibres) case, one may use variation of Bergman kernels to give an equivalent definition of Nakano positivity for direct image bundle $\{\mathcal H_t\}$. Put
\begin{equation*}
    K_{j\bar kp\bar q}=K_{j\bar k}(\bar\eta_q,\bar\eta_p)-\langle\langle \cdot\bar\eta_{p,\bar k},\cdot\bar\eta_{q, \bar j}\rangle\rangle,
\end{equation*}
where $\eta_p, \eta_q$ are points in $D_t, \ 1\leq p,q\leq r$, $r\in\mathbb Z_+$. We shall prove that (see Lemma ~\ref{le:Nakano-Bergman}) Nakano semi-positivity of $\{\mathcal H_t\}$ is equivalent to
\begin{equation}\label{eq:Bergman-Nakano}
    \sum c^{jp}\overline{c^{kq}}K_{j\bar kp\bar q}\geq0,
\end{equation}
where $r=\dim_{\C}\mathcal H_t$. In case $\dim_{\C}\mathcal H_t=\infty$, we say that $\{\mathcal H_t\}$ is Nakano semi-positive if the corresponding Bergman kernels satisfies \eqref{eq:Bergman-Nakano} for all $r\in\mathbb Z_+$.

By a similar argument as in the proof of Theorem~\ref{th:Bergman0}, we can also get a variational formula for weighted Bergman kernel. But it turns out to be an inequality instead of an equality. In case the weight function is plurisubharmonic, we shall prove that the corresponding direct image sheaf is Nakano semi-positive which can be seen as a generalization of Berndtsson's result (Theorem 1.1 in \cite{Bern09}) to non-product case (see also \cite{Tsuji05},\cite{Sch12}, \cite{Raufi13} and \cite{LiuYang13} for other results in this direction).

Assume that our weight function $\phi$ is smooth up to the boundary of $D$. Assume further that $\phi$ is strictly  plurisubharmonic along the fibres. Thus geodesic curvature of $\{\phi^t\}$
\begin{equation}\label{eq:geodesic curvature}
    c_{j\bar k}(\phi):=\phi_{j\bar k}-\sum\phi_{j\bar \alpha}\phi^{\bar \alpha\beta}\phi_{\bar k \beta}.
\end{equation}
is well defined. By Berndtsson's formula (see Lemma 4.1 in \cite{Bern11}),
\begin{equation*}
    \delta_{V_j^{\phi}}(\partial\dbar\phi)=\sum c_{j\bar k}(\phi)d\bar t^k.
\end{equation*}
We show prove that:

\begin{theorem}\label{th:Bergman1}  If $D$ is pseudoconvex and $\phi$ is plurisubharmonic on $D$ then $\{\mathcal H_t\}$ is Nakano semi-positive.
\end{theorem}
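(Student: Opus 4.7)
The plan is to run the argument of Theorem \ref{th:Bergman0} with a nonzero weight and establish a variational \emph{inequality} of the form
\begin{equation*}
    K_{j\bar k}(\zeta,\bar\eta) \geq \langle\langle \cdot\bar\eta_{\bar k}, \cdot\bar\zeta_{\bar j}\rangle\rangle + \int_{\partial D_t} b_{j\bar k}(\rho)\langle\cdot\bar\eta,\cdot\bar\zeta\rangle\, d\sigma + i^{n^2}\int_{D_t} c_{j\bar k}(\phi)\{\cdot\bar\eta,\cdot\bar\zeta\} + \langle\langle {_j\bar\eta},{_k\bar\zeta}\rangle\rangle,
\end{equation*}
in which each of the last three contributions, once contracted with an arbitrary $\sum c^{jp}\overline{c^{kq}}$, becomes a non-negative Hermitian form. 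Combined with the characterization \eqref{eq:Bergman-Nakano} of Nakano semi-positivity, this immediately yields the theorem.

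To set this up, I would fix arbitrary $c^{jp}$ and points $\eta_p\in D_t$, and choose, for each $j$, the vector field $V_j:=V_j^{-\log-\rho}$ supplied by the Key Lemma. This single choice secures two things at once: $V_j\in\mathcal V_j$ is tangent to $\partial D$, validating all integration-by-parts, and the Kodaira--Spencer representative $\dbar^t V_j$ is primitive with respect to the complete quasi-K\"ahler-Einstein metric $\omega^t=i\partial^t\dbar^t(-\log-\rho)$, so $_j\bar\eta^{n-1,1}$ is primitive and the Bochner--Kodaira--Nakano identity for $(n-1,1)$-forms applies. Starting from \eqref{eq:variation-second}, I process $\langle\langle[L_j,L_{\bar k}]\cdot\bar\eta,\cdot\bar\zeta\rangle\rangle$ via Stokes' theorem: the boundary contribution carries exactly the density $b_{j\bar k}(\rho)$ of \eqref{eq:k}, which is non-negative on $c^{jp}$-contractions because the vectors $V_j$ lie in the complex tangent space to $\partial D$ (Key Lemma) and $i\partial\dbar\rho\ge 0$ there by pseudoconvexity of $D$.

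For the remaining term $I_3$, invoke Demailly's $L^2$-existence theorem on the complete K\"ahler manifold $(D_t,\omega^t)$ with weight $e^{-\phi}$, applied to equation \eqref{eq:n0-n11}; completeness of $\omega^t$ removes the obstruction indicated in \eqref{eq:obstruction}. Primitivity of $_j\bar\eta^{n-1,1}$ reduces the Bochner--Kodaira--Nakano curvature term to $i\partial\dbar\phi$ contracted in the $V_j$-direction, and combining this with Berndtsson's identity $\delta_{V_j^\phi}(\partial\dbar\phi)=\sum c_{j\bar k}(\phi)\,d\bar t^k$ and the observation that $V_j-V_j^\phi$ is vertical (hence does not affect the $d\bar t^k$-component of the contraction) identifies this contribution with the geodesic-curvature integral in the display above. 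The residual $L^2$-norm of the $\square'$-harmonic projection gives $\langle\langle {_j\bar\eta}, {_k\bar\zeta}\rangle\rangle$, which is manifestly a non-negative Hermitian form, while plurisubharmonicity of $\phi$ on $D$ forces $c_{j\bar k}(\phi)\ge 0$ as a Hermitian matrix.

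The main technical obstacle is the $L^2$-estimate step on the non-compact complete K\"ahler manifold $(D_t,\omega^t)$: one has to verify that $_j\bar\eta^{n,0}$ and $_j\bar\eta^{n-1,1}$ sit in the correct weighted spaces (note that $\omega^t$ blows up near $\partial D_t$), that the standard cutoff and density arguments preserve both minimality and the boundary contributions (which are computed with respect to the Euclidean structure), and that the Hermitian identifications between the $\omega^t$-inner product on primitive forms and the Euclidean pairing $-i^{n^2}\int\{\cdot,\cdot\}$ are compatible. This compatibility is precisely what the Key Lemma together with the completeness of $\omega^t$ is designed to guarantee. Once these analytic points are secured, the pointwise positivity of $b_{j\bar k}(\rho)$ (pseudoconvexity of $D$), of $c_{j\bar k}(\phi)$ (plurisubharmonicity of $\phi$), and of the harmonic term delivers the Nakano semi-positivity.
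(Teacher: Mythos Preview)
Your target inequality and overall architecture are the right ones, but the bookkeeping of \emph{where} each positive term comes from is off, and the argument as written has a genuine gap.

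First, in the weighted case the commutator term is \emph{not} a pure boundary integral. The integration-by-parts computation (the weighted version of the lemma you invoke) gives
\[
\langle\langle [L_j,L_{\bar k}]\cdot\bar\eta,\cdot\bar\zeta\rangle\rangle
=\int_{\partial D_t} b_{j\bar k}(\rho)\,\langle\cdot\bar\eta,\cdot\bar\zeta\rangle\,d\sigma
+\langle\langle c_{j\bar k}(\phi,V)\,\cdot\bar\eta,\cdot\bar\zeta\rangle\rangle,
\]
with the interior density
\(
c_{j\bar k}(\phi,V)=c_{j\bar k}(\phi)+\langle(\dbar^t\phi)_{V_j^\psi},(\dbar^t\phi)_{V_k^\psi}\rangle_{i\partial^t\dbar^t\phi}.
\)
So the geodesic curvature $c_{j\bar k}(\phi)$ comes from the commutator, not from the $L^2$-estimate.

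Second, the $L^2$-estimate applied to \eqref{eq:n0-n11}, i.e.\ to $\dbar^t a=\partial^t_\phi b+c$ with $c=\sum c^{jp}(\dbar^t\phi)_{V_j^\psi}\wedge\cdot\bar\eta_p$, yields (Lemma~\ref{le:L2-2})
\[
\|a\|^2\le \|b\|^2-\|b_1\|^2+\|c\|^2_{i\partial^t\dbar^t\phi},
\]
hence $I_3=\|b\|^2-\|a\|^2\ge \|b_1\|^2-\|c\|^2_{i\partial^t\dbar^t\phi}$ with a \emph{negative} $\|c\|^2$ contribution. It does not produce $c_{j\bar k}(\phi)$. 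The Bochner--Kodaira--Nakano curvature operator $[i\partial^t\dbar^t\phi,\Lambda]$ acts on fibrewise $(n-1,1)$-forms and sees only the vertical Hessian of $\phi$; it has no access to the $t$-derivatives that enter $c_{j\bar k}(\phi)$. Your attempt to import Berndtsson's identity here via ``$V_j-V_j^\phi$ is vertical, hence does not affect the $d\bar t^k$-component of $\delta_{V_j}(\partial\dbar\phi)$'' is also incorrect: a vertical vector $W$ contracted with $\partial\dbar\phi$ picks up the mixed terms $\sum W^\alpha\phi_{\alpha\bar k}\,d\bar t^k$, which do not vanish in general.

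The point you are missing is the \emph{cancellation}: after contraction with $c^{jp}\overline{c^{kq}}$, the extra piece $\langle(\dbar^t\phi)_{V_j^\psi},(\dbar^t\phi)_{V_k^\psi}\rangle_{i\partial^t\dbar^t\phi}$ coming from the commutator is exactly $\|c\|^2_{i\partial^t\dbar^t\phi}$, and it absorbs the negative term from the $L^2$-estimate. What survives is precisely boundary $+$ $c_{j\bar k}(\phi)$-integral $+$ $\|b_1\|^2$, each nonnegative under the hypotheses. Once you reorganize the argument this way, your proof goes through and coincides with the paper's.
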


Berndtsson has informed the author that it would be also possible to define the notion of curvature $\Theta_{j\bar k}$ (as a densely defined closed operator) for $\{\mathcal H_t\}$. Then the proof of the above theorem implies that $\Theta_{j\bar k}-c_{j\bar k}(\phi)$ is Nakano semi-positive if $D$ is pseudoconvex.

Recall that our start point is to translate the curvature property to plurisubharmonicity property of the Bergman projection. Let $f$ be a smooth function with compact support in $D$ such that $f$ is a holomorphic function of $t$. Put
\begin{equation}\label{eq:kf}
    K_f(t)=\int_{D_t\times D_t}K^t(z,\bar w)f(t,z)\overline{f(t,w)}.
\end{equation}
Thus $K_f(t)$ is the square norm of the Bergman projection of $\bar fe^{\phi^t}$. Let $u^t$ be the $L^2$-minimal solution of $\op^t(\cdot)=\op^t(\bar fe^{\phi^t})$. Then $K_f(t)=||\bar fe^{\phi^t}||^2-||u^t||^2$. In compact case, Berndtsson and P\u{a}un showed that plurisubharmonicity of $K_f$ is equivalent to Griffiths positivity of the direct image bundle (see Proposition 3.4 in \cite{BernPaun08}). We shall prove that:

\begin{theorem}\label{th:Bern} The function $\log K_f(t)$ is plurisubharmonic on $\mathbb B$.
\end{theorem}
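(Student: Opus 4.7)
The plan is to obtain plurisubharmonicity of $\log K_f$ as a dual consequence of Theorem~\ref{th:Bergman1}. The strategy is to reinterpret $K_f(t)$ as the squared dual norm of a canonical holomorphic section of $\{\mathcal{H}_t^*\}$, and then invoke the classical chain: Nakano semi-positive $\Rightarrow$ Griffiths semi-positive $\Rightarrow$ dual bundle Griffiths semi-negative $\Rightarrow$ logarithm of the squared norm of any holomorphic section is plurisubharmonic.

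Concretely, since $f$ is compactly supported in $D$ and holomorphic in $t$, the bounded linear functional
\begin{equation*}
\Lambda_t\colon h\mapsto i^{n^2}\int_{D_t}h\wedge\overline{\bar f\,d\zeta}
\end{equation*}
on $\mathcal{H}_t$ has Riesz representative $s(t):=P_t(\bar f e^{\phi^t})\in\mathcal{H}_t$, so $\|\Lambda_t\|^2_{\mathcal{H}_t^*}=\|s(t)\|^2=K_f(t)$. Compact support of $f$ in $D$ removes boundary contributions when differentiating $\int_{D_t}$ in $\bar t$, and holomorphy of $f$ in $t$ then yields $\partial_{\bar t^j}\Lambda_t(h(t))=0$ for every holomorphic section $h(t)$ of $\{\mathcal{H}_t\}$; hence $\{\Lambda_t\}$ is a holomorphic section of $\{\mathcal{H}_t^*\}$. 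Theorem~\ref{th:Bergman1} then tells us $\{\mathcal{H}_t\}$ is Nakano, hence Griffiths, semi-positive, so $\{\mathcal{H}_t^*\}$ is Griffiths semi-negative, which forces $\log\|\Lambda_t\|^2=\log K_f$ to be plurisubharmonic.

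The main obstacle is to carry out the last step rigorously in the infinite-rank setting, where Chern curvature is only densely defined (as noted in the remark after Theorem~\ref{th:Bergman1}). I would circumvent this by a local comparison. Fix $t_0\in\mathbb{B}$ with $K_f(t_0)>0$ (else the claim is trivial at $t_0$). Using Lemma~\ref{le:stability} and an Ohsawa--Takegoshi extension of $s(t_0)$ from $D_{t_0}$ to $D$, construct a local holomorphic section $h(t)$ of $\{\mathcal{H}_t\}$ near $t_0$ with $h(t_0)=s(t_0)$. After a first-order correction by a linear term in $t-t_0$, further arrange $D'h(t_0)=0$ for the Chern connection of $\{\mathcal{H}_t\}$. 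Cauchy--Schwarz then gives, for $t$ near $t_0$,
\begin{equation*}
\log K_f(t)\geq \log|\Lambda_t(h(t))|^2-\log\|h(t)\|^2,
\end{equation*}
with equality at $t=t_0$. The map $t\mapsto \Lambda_t(h(t))$ is holomorphic (pairing of holomorphic sections of dual bundles), so the first term is plurisubharmonic; the normalization $D'h(t_0)=0$ makes the Cauchy--Schwarz contribution in the curvature formula for $\log\|h(t)\|^2$ vanish at $t_0$, leaving
\begin{equation*}
i\partial\bar\partial\bigl(-\log\|h(t)\|^2\bigr)(t_0)=\frac{\langle i\Theta(\mathcal{H})h(t_0),h(t_0)\rangle}{\|h(t_0)\|^2}\geq 0,
\end{equation*}
where non-negativity of the right-hand side follows from the Nakano condition \eqref{eq:Bergman-Nakano} applied to the single direction $h(t_0)$, approximated by finite linear combinations of reproducing kernels. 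Since $t_0$ is thus a local minimum of the non-negative function $\log K_f-[\log|\Lambda_t(h(t))|^2-\log\|h(t)\|^2]$, one obtains $i\partial\bar\partial\log K_f(t_0)\geq 0$; the arbitrariness of $t_0\in\mathbb{B}$ completes the proof.
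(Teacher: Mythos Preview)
Your approach is genuinely different from the paper's. The paper argues directly: after checking that $D_tP_f:=e^\phi\partial_t(P_fe^{-\phi})\perp\mathcal H_t$ (using that $f$ has compact support and is holomorphic in $t$), one obtains
\[
K_{f,t\bar t}=\|P_{f,\bar t}\|^2+\langle\langle\phi_{t\bar t}P_f,P_f\rangle\rangle-\|D_tP_f\|^2,
\]
and then Lemma~\ref{le:L2-2} (with $b=0$) applied to $\dbar^t(D_tP_f)=-P_f\,\dbar^t\phi_t$ bounds $\|D_tP_f\|^2$ by $\langle\langle|\dbar^t\phi_t|^2_{i\partial^t\dbar^t\phi}P_f,P_f\rangle\rangle$, yielding $(\log K_f)_{t\bar t}\geq\langle\langle c(\phi)P_f,P_f\rangle\rangle/\|P_f\|^2\geq0$. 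Theorem~\ref{th:Bergman1} is never invoked.

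Your dual-bundle route is the correct conceptual picture and would be a clean proof in finite rank, but the step ``non-negativity \ldots\ follows from the Nakano condition \eqref{eq:Bergman-Nakano} applied to the single direction $h(t_0)$, approximated by finite linear combinations of reproducing kernels'' is a genuine gap. The paper's definition \eqref{eq:Bergman-Nakano} tests only finite combinations $\sum c_p\cdot\bar\eta_p$, and no continuity or closability of $u\mapsto\langle\Theta_{j\bar k}u,u\rangle$ has been established that would let you pass to the limit $u=h(t_0)$; worse, your Cauchy--Schwarz comparison needs $h(t_0)=s(t_0)$ \emph{exactly} (for the touching condition at $t_0$), so you cannot simply replace $h(t_0)$ by a nearby kernel combination without destroying the argument. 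To close the gap you would have to compute $i\partial\bar\partial\log\|h(t)\|^2$ directly for your extended section $h$, and that computation---via the Lie-derivative formalism, Lemma~\ref{le:bdy}, and Lemma~\ref{le:L2-2}---is essentially the paper's proof transplanted from $P_f$ to $h$. What your approach does buy, once repaired, is a transparent link showing Theorem~\ref{th:Bern} as the Griffiths (rank-one) shadow of Theorem~\ref{th:Bergman1}; the paper's direct computation buys independence from Theorem~\ref{th:Bergman1} and sidesteps the infinite-rank curvature formalism entirely.
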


\begin{remark}\label{re:Bern} Using approximation technique in \cite{Bern06}, we shall prove that the above theorem is true for general pseudoconvex domain $D$ and plurisubharmonic function $\phi$ (i.e., no restriction on regularity of $D$ and $\varphi$).
\end{remark}

We want to point out that Theorem 1.3 in \cite{Wang13} is not true in general (i.e., the support of $f$ is necessary to be relatively compact in $D$). In fact, take $\phi=0$, if the above theorem is true for $f=1$ then $\log|D_t|$ is plurisubharmonic. However, Berndtsson showed that if $D_t$ is invariant under rotations $\cdot\mapsto e^{i\theta}\cdot$ then $-\log|D_t|$ is plurisubharmonic (see Theorem 1.2 in \cite{BernPaun08}).

Let's go back to \eqref{eq:variation-second} again. In one-dimensional case, \eqref{eq:obstruction} is always true. What's more, every form is primitive. Thus every vector field in $\mathcal V_j$ can be used to compute the variation.
In compact case (i.e., deformation of compact Riemann surfaces), Berndtsson (see \cite{Bern11}) showed that even if the curvature of the $0-$th direct image of the relative canonical line bundle vanishes identically, the Kodaira-Spencer class still happened to be non zero (i.e., the deformation is not trivial). Inspired by Berndtsson's idea, we shall use curvature of $\{\mathcal H_t\}$ to study triviality of holomorphic motions.

A homeomorphism $F:(t,z)\mapsto(t,f(t,z))$ from $\mathbb B\times D_0$ to $D$ is called a holomorphic motion (see \cite{MSS83}) of $D_0$ (with graph $D$) if $f(0,\cdot)$ is the identity mapping and $f(\cdot,z)$ is holomorphic for every fixed $z\in D_0$. $F$ is said to be a trivial motion of $D_0$ if there exists a bi-holomorphic mapping $G$ from $\mathbb B\times D_0$ to the graph of $F$ such that $G(\{t\}\times D_0)=F(\{t\}\times D_0)$ for every $t\in \mathbb B$ (i.e., there exists a fibre-preserving bi-holomorphic mapping from $\mathbb B\times D_0$ to $D$).

Consider the classical (i.e., $\phi\equiv0$) Bergman space $\mathcal H_t$ of the fibre $D_t:=F(\{t\}\times D_0)$. If  $D_0$ is a planar domain then the complex structure on each fibre can be represented by $J=f_{\bar z}/f_z$. We shall use variation of the Bergman kernels $K^t$ (or the curvature $\Theta_{j\bar k}$ of $\{\mathcal H_t\}$) to decode triviality of $F$.

\begin{theorem}\label{th:Cflat} Let $D_0$ be a smoothly bounded planar domain. Let $F$ be a holomorphic motion of $D_0$. If $F$ is smooth up to the boundary then the followings are equivalent:
\begin{enumerate}[\upshape (i)]
  \item $F$ is trivial.
  \item $\Theta_{j\bar k}\equiv0$, (i.e., $\sum c^{jp}\overline{c^{kq}}K_{j\bar kp\bar q}\equiv0$).
  \item For every $(t,\eta)$ in $D$ and every $j$,
  \begin{equation}\label{eq:trivialcondition}
    \int_{D_t} K^t(\zeta,\bar\eta) \left(\frac{(f_z)^2J_j}{|f_z|^2(1-|J|^2)}\right)(t,\zeta)\ id\zeta\wedge d\bar\zeta=0.
  \end{equation}
\end{enumerate}
\end{theorem}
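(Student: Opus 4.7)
The plan is to prove the three conditions are equivalent via the cycle (i)$\Rightarrow$(ii)$\Rightarrow$(iii)$\Rightarrow$(i). The implication (i)$\Rightarrow$(ii) is essentially formal: if $F$ is trivial and $G:\mathbb B\times D_0\to D$ is the fibre-preserving biholomorphism, then the standard transformation rule of Bergman kernels under biholomorphisms identifies $\{\mathcal H_t\}$ with the constant Hilbert bundle $\mathbb B\times\mathcal H_0$, and every component $K_{j\bar kp\bar q}$ of the curvature vanishes identically.

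For (ii)$\Rightarrow$(iii) I would apply Theorem~\ref{th:Bergman0} with $n=1$ and $\phi\equiv 0$. Subtracting the first term on the right of \eqref{eq:Bergman0} and substituting into the definition of $K_{j\bar kp\bar q}$ yields
\begin{equation*}
K_{j\bar kp\bar q}=\int_{\partial D_t}b_{j\bar k}(\rho)\,K^t(\mu,\bar\eta_q)\overline{K^t(\mu,\bar\eta_p)}\,d\sigma+\langle\langle{_j\bar\eta_q},{_k\bar\eta_p}\rangle\rangle,
\end{equation*}
exhibiting $K_{j\bar kp\bar q}$ as a sum of two Nakano semi-positive Hermitian forms in the index pairs $(j,p)$ and $(k,q)$. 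Vanishing of $\sum c^{jp}\overline{c^{kq}}K_{j\bar kp\bar q}$ for every $c$ and every $r$ therefore forces each summand to vanish separately; in particular $_j\bar\eta=0$ for all $\eta\in D_t$ and all $j$. Since $n=1$, this says the representative ${_j\bar\eta^{0,1}}=K^t(\cdot,\bar\eta)\,v_{j,\bar\mu}\,d\bar\mu$ of the Kodaira--Spencer class is $L^2$-orthogonal to every $\op^t$-closed $(0,1)$-form, and hence to each conjugate Bergman element $\overline{K^t(\cdot,\bar\eta)}\,d\bar\mu$. Passing to the pullback chart via $\zeta=f(t,z)$, a direct change of variables identifies $v_{j,\bar\mu}$ (modulo the Jacobian factor $|f_z|^2(1-|J|^2)$ absorbed into $id\zeta\wedge d\bar\zeta$) with $(f_z)^2J_j/(|f_z|^2(1-|J|^2))$, and the orthogonality becomes exactly \eqref{eq:trivialcondition}.

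The main obstacle is (iii)$\Rightarrow$(i). I would read \eqref{eq:trivialcondition}, holding for every $\eta\in D_t$, as the statement that for each $j$ the infinitesimal variation $J_j$ of the Beltrami coefficient of $F$ is $L^2$-orthogonal to the dense family of conjugate Bergman kernels, hence is a $\op^t$-coboundary on $D_t$ that is smooth up to $\partial D_t$ thanks to smoothness of $F$ on $\overline D$. Solving this Cauchy--Riemann equation with boundary-tangent normalization produces, for each $j$, a smooth $(1,0)$-vector field $W_j$ on $\overline{D_t}$ tangent to $\partial D_t$ whose $\op^t$-image kills the infinitesimal Kodaira--Spencer class. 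Integrating the resulting commuting family $\{\partial/\partial t^j+W_j\}_{j=1}^m$ on $D$ yields a holomorphic family of biholomorphisms $D_0\to D_t$, and thus the fibre-preserving trivialization $\mathbb B\times D_0\to D$. The technical heart is the passage from the $L^2$-orthogonality supplied by (iii) to smooth boundary-tangent vector fields $W_j$ and then to a genuine holomorphic trivialization; this relies both on the good $\op$-solvability theory on smoothly bounded planar domains and on the boundary regularity of $F$, and it is what separates the planar case from the compact Riemann-surface counterexamples of \cite{Bern11}.
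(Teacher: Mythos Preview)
Your cycle (i)$\Rightarrow$(ii)$\Rightarrow$(iii)$\Rightarrow$(i) matches the paper's. For (i)$\Rightarrow$(ii) the paper argues slightly differently: it takes the holomorphic vector fields $V_j^G:=G_*(\partial/\partial t^j)$, notes $\bar\partial^t V_j^G=0$ so every ${_j\bar\eta_p}\equiv 0$, and then reads off vanishing curvature from the variational formula; your Bergman-kernel transformation argument is an equally valid alternative.

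For (ii)$\Rightarrow$(iii) there are two slips. First, you need the boundary term to be nonnegative in order to split the sum; the paper instead observes that the graph $D$ of a holomorphic motion is Levi-flat, so the boundary integral in Theorem~\ref{th:planar} vanishes identically, giving directly $\sum c^{jp}\overline{c^{kq}}K_{j\bar kp\bar q}=\|\sum c^{jp}{_j\bar\eta_p}\|^2$. Second, ${_j\bar\eta}=0$ means ${_j\bar\eta}^{0,1}\perp\ker\partial^t$, not $\ker\bar\partial^t$ (every $(0,1)$-form is $\bar\partial^t$-closed in dimension one). Testing against $d\bar\zeta\in\ker\partial^t$, rather than a conjugate Bergman element, yields $\int_{D_t}f_{j\bar\zeta}K^t(\zeta,\bar\eta)\,id\zeta\wedge d\bar\zeta=0$, and the identity $f_{j\bar\zeta}=(f_z)^2J_j/(|f_z|^2(1-|J|^2))$ gives \eqref{eq:trivialcondition}.

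The substantive gap is in (iii)$\Rightarrow$(i). Your outline produces fibrewise-holomorphic boundary-tangent corrections $W_j$, but it does not explain why $\partial/\partial t^j+W_j$ is holomorphic \emph{in $t$}, nor why these fields commute; without both you cannot integrate to a fibre-preserving biholomorphism of the total space. The paper's construction is explicit. From (iii) one writes $\bar\partial^t f_j=(\partial^t)^*(g^{t,j}\,id\zeta\wedge d\bar\zeta)=-i\bar\partial^t g^{t,j}$, so $V_j:=\partial/\partial t^j+(f_j+ig^j)\partial/\partial\zeta$ is fibrewise holomorphic and, since $g^{t,j}\in{\rm Dom}(\partial^t)^*$, satisfies $g^j=0$ on $\partial D$ (hence $V_j=V_j^F$ there). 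The crux is holomorphicity in $t$: one checks that $f_{j\bar k}+ig^j_{\bar k}$ is fibrewise holomorphic, and then uses the identity $[V_j^F,\overline{V_k^F}]=0$ (which holds because $F$ itself is a motion) together with $g^j|_{\partial D}=0$ to show $f_{j\bar k}+ig^j_{\bar k}$ vanishes on $\partial D_t$, hence everywhere. The commutativity $[V_j,V_k]=0$ is proved by the same ``holomorphic and vanishing on the boundary'' trick, now using $[V_j^F,V_k^F]=0$. These boundary-to-interior uniqueness steps, driven by the commutator identities of $V_j^F$, are the heart of the implication and are missing from your sketch.
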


As a direct corollary, we have:

\begin{corollary}\label{co:last} Let $F:(t,z)\mapsto(t,z+a(t)\bar z)$ be a holomorphic motion of a smoothly bounded planar domain. Then $F$ is trivial if and only if $a\equiv0$ on $\mathbb B$.
\end{corollary}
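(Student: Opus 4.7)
The plan is to deduce the corollary directly from the equivalence (i)$\Leftrightarrow$(iii) in Theorem \ref{th:Cflat}, by substituting the explicit form $f(t,z)=z+a(t)\bar z$ into \eqref{eq:trivialcondition}. The direction $a\equiv 0\Rightarrow F$ trivial is immediate, since then $F$ is the identity map of $\mathbb B\times D_0$ and $G=F$ itself serves as the required fibre-preserving biholomorphism.

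For the converse, assume $F$ is trivial. Because $f(\cdot,z)$ must be holomorphic for each fixed $z\in D_0$, the coefficient $a$ is holomorphic on $\mathbb B$; the normalization $f(0,\cdot)=\mathrm{id}$ gives $a(0)=0$; and, since $F$ is a homeomorphism which is orientation-preserving near $t=0$, the real Jacobian $|f_z|^2-|f_{\bar z}|^2=1-|a(t)|^2$ stays positive throughout $\mathbb B$, so $|a(t)|<1$. A direct computation gives $f_z\equiv 1$, $f_{\bar z}=a(t)$, hence the Beltrami coefficient is $J=a(t)$ and $J_j=\partial a/\partial t^j$. These depend only on $t$, so the integrand in \eqref{eq:trivialcondition} factors as a $\zeta$-independent scalar times $K^t(\zeta,\bar\eta)$, and condition (iii) reduces to
\begin{equation*}
    \frac{a_j(t)}{1-|a(t)|^2}\int_{D_t} K^t(\zeta,\bar\eta)\,i\,d\zeta\wedge d\bar\zeta=0
\end{equation*}
for every $j$ and every $(t,\eta)\in D$.

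To finish, I would observe that the remaining Bergman integral is a nonzero constant in $\eta$. Since $D_t$ is bounded, $1\cdot d\mu\in\mathcal H_t$; applying the reproducing property to this element yields $i\int_{D_t}\overline{K^t(\mu,\bar\eta)}\,d\mu\wedge d\bar\mu=1$, and taking complex conjugates (using that $i\,d\mu\wedge d\bar\mu$ is a real form) gives $i\int_{D_t}K^t(\mu,\bar\eta)\,d\mu\wedge d\bar\mu=1$ as well. Consequently $a_j(t)\equiv 0$ on $\mathbb B$ for every $j$; together with the holomorphicity of $a$ this forces $a$ to be constant, and $a(0)=0$ then gives $a\equiv 0$.

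The only step that goes beyond direct substitution is the nonvanishing of the Bergman-kernel integral, but since this follows at once from the reproducing identity applied to the constant function $1$, no serious obstacle arises; the rest of the argument is a bookkeeping check that the $\zeta$-dependence in \eqref{eq:trivialcondition} disappears for the affine family $z+a(t)\bar z$.
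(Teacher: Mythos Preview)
Your proof is correct and follows essentially the same route as the paper: both reduce the corollary to the equivalence (i)$\Leftrightarrow$(iii) of Theorem~\ref{th:Cflat}, observe that for $f(t,z)=z+a(t)\bar z$ condition \eqref{eq:trivialcondition} collapses to $a_j(t)\equiv0$, and then use $a(0)=0$ together with holomorphicity of $a$ to conclude $a\equiv0$. The paper's proof is simply terser, asserting without details that ``(iii) is equivalent to $a_j(t)\equiv0$,'' while you spell out the factorization of the integrand and the nonvanishing of the Bergman integral via the reproducing property for the constant function.
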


In \cite{Liurenshan}, Ren-Shan Liu showed that if $f=z+t^2\bar z$, then $F(\mathbb D\times \mathbb D)$ is not biholomorphic equivalent to the bidisc, where $\mathbb D$ denotes the unit disc. Corollary~\ref{co:last} is interesting, since every holomorphic motion of a subset of $\C$ can be extended to the whole complex plane (see \cite{Sl91} and \cite{ST86}). It is also interesting to study high-dimensional generalizations of Theorem~\ref{th:Cflat}.

\section{Variation of fibre integrals}

Let $\B$ be the unit ball in $\R^m$. Let $\{D_t\}_{t\in\B}$ be a family of smoothly bounded domain in $\R^n$. $\{D_t\}_{t\in\B}$ is said to be a smooth family if
\begin{equation*}
    D:=\{(t,x)\in\R^{m+n}: x\in D_t, \ t\in\B\}
\end{equation*}
possesses a smooth defining function $\rho$ such that $\rho|_{D_t}$ is a smooth defining function of $D_t$ for every $t$ in $\B$. Put
\begin{equation}\label{eq:boundary}
    [D]:=\overline D\cap(\B\times\R^n), \ \delta D:=\partial D\cap(\B\times\R^n).
\end{equation}
Let $dx:=dx^1\wedge\cdots\wedge dx^n$ denotes the Euclidean volume form on $\R^n$. Fix a smooth function $f$ on a neighborhood of $[D]$, the fibre integrals
\begin{equation*}
    F(t):=\int_{D_t}f(t,x)dx
\end{equation*}
depend smoothly on $t\in \B$. We shall introduce a natural way to compute the derivatives of $F(t)$ (see \cite{Sch12} for related results). For very fixed
$j\in\{1,\cdots m\}$, let
\begin{equation*}
    V_j:=\frac{\partial}{\partial t^j}-\sum v^{\alpha}_j\frac{\partial}{\partial x^\alpha}
\end{equation*}
be a smooth vector field on a neighborhood of $[D]$. We shall prove that:

\begin{theorem}\label{th:vfi} Let $\{D_t\}_{t\in\B}$ be a smooth family of smoothly bounded domain in $\R^m$. Assume that $V_j(\rho)$ vanishes on $\delta D$, then we have
\begin{equation}\label{eq:vfi}
 \frac{\partial F}{\partial t^j}(t)=\int_{D_t}L^t_{V_j}\left(f(t,x)dx\right),
\end{equation}
for every $t$ in $\B$.
\end{theorem}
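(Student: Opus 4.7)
The plan is to construct the flow of $V_j$ and use it to identify each nearby fibre $D_{t_0+se_j}$ with $D_{t_0}$, after which the differentiation of $F$ becomes a routine Lie-derivative calculation.

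Let $\Phi_s$ denote the local flow of $V_j$ on a neighborhood of $[D]$. Since $V_j$ is smooth and $\overline{D_{t_0}}$ is compact for interior $t_0\in\B$, $\Phi_s$ is a well-defined diffeomorphism of a neighborhood of $\overline{D_{t_0}}$ for $|s|$ small enough. Because $P_*V_j=\partial/\partial t^j$, the $t$-component of any integral curve starting at $(t_0,x_0)$ is $t_0+se_j$, so $\Phi_s$ sends the slice $\{t=t_0\}$ into $\{t=t_0+se_j\}$. The tangency hypothesis $V_j(\rho)|_{\delta D}=0$ says that along any integral curve starting at $(t_0,x_0)\in\delta D$, the scalar function $s\mapsto\rho(\Phi_s(t_0,x_0))$ has derivative zero whenever its value is zero; a standard ODE argument then forces it to remain identically zero. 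Hence $\Phi_s$ preserves $\delta D$, and by continuity $\Phi_s(\overline{D_{t_0}})=\overline{D_{t_0+se_j}}$ for small $s$.

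Viewing $\omega:=f(t,x)\,dx^1\wedge\cdots\wedge dx^n$ as an $n$-form on a neighborhood of $[D]$, the change-of-variables formula gives
$$
F(t_0+se_j)=\int_{\Phi_s(D_{t_0})}\omega=\int_{D_{t_0}}\Phi_s^*\omega.
$$
Differentiating at $s=0$ and invoking the standard identity $\left.\tfrac{d}{ds}\right|_{s=0}\Phi_s^*\omega=L_{V_j}\omega$, together with the fact that integration over the $n$-dimensional submanifold $D_{t_0}$ automatically pulls back via $i_{t_0}^*$, yields $\partial F/\partial t^j(t_0)=\int_{D_{t_0}}L^t_{V_j}\omega$.

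The only real obstacle is the tangency argument used to establish $\Phi_s(\overline{D_{t_0}})=\overline{D_{t_0+se_j}}$; the rest is formal manipulation of pullbacks. One should also verify the uniformity in $(t_0,x_0)$ of the flow's existence interval and the legitimacy of differentiating under the integral sign, but both follow from the compactness of $\overline{D_{t_0}}$ and smoothness of $V_j$ and $f$.
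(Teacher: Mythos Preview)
Your proof is correct and follows essentially the same approach as the paper: both construct the flow of $V_j$, use the tangency hypothesis $V_j(\rho)|_{\delta D}=0$ to show this flow carries $D_{t_0}$ diffeomorphically onto $D_{t_0+se_j}$, pull the integral back to $D_{t_0}$, and differentiate to obtain the Lie derivative. Your exposition is in fact slightly more explicit than the paper's about why the flow preserves $\delta D$ (the paper simply asserts compatibility), and your handling of the pullback $i_{t_0}^*$ is a bit cleaner than the paper's auxiliary map $\Psi^a$, but the underlying argument is the same.
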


\begin{proof} Without lose of generality, we may assume that $t=0$ and $j=1$. Since $V_1(\rho)$ vanishes on $\delta D$, the motion
\begin{equation*}
\Phi:(-1,1)\times D_0\rightarrow\R^{m}
\end{equation*}
of $D_0$ associated to $V_1$ is compatible with $\{D_t\}$, i.e.
\begin{equation*}
\Phi(a\times D_0)=D_{a\nu}, \ \nu=(1,0,\cdots,0)\in\R^p,
\end{equation*}
for every $a\in(-1,1)$. Since for every fixed $a\in(-1,1)$,
\begin{equation*}
\Phi^a:x\mapsto \Phi(a,x)
\end{equation*}
is a $C^{\infty}$ isomorphism from $D_0$ to $D_{a\nu}$, we have
\begin{equation}\label{eq:derivative}
     \frac{\partial F}{\partial t^1}(0)=\lim_{0\neq a\to 0}\int_{D_0}\frac{f(a\nu,\Phi^a(x))d\Phi^a(x)-f(0,x)dx}{a}
\end{equation}
Since $V_1$ and $f$ are smooth up to the boundary, we have
\begin{equation}\label{eq:derivative1}
     \frac{\partial F}{\partial t^1}(0)=\int_{D_0}\lim_{0\neq a\to 0}\frac{f(a\nu,\Phi^a(x))d\Phi^a(x)-f(0,x)dx}{a}.
\end{equation}
By definition of Lie derivative,
\begin{equation}\label{eq:derivative2}
L_{V_1}\left(f(t,x)dx\right)(0,x)=\lim_{0\neq a\to 0}\frac{[(\Psi^a)^*(fdx)](0,x)-f(0,x)dx}{a},
\end{equation}
where
\begin{equation*}
    \Psi^a:(b\nu,\Phi^b(x))\mapsto(b\nu+a\nu,\Phi^{b+a}(x)), \ (b,x)\in (-1+|a|,1-|a|)\times D_0.
\end{equation*}
Since
\begin{equation*}
    i_0^*\left\{[(\Psi^a)^*(fdx)](0,x)-f(a\nu,\Phi^a(x))d\Phi^a(x)\right\}=0,
\end{equation*}
\eqref{eq:vfi} follows from \eqref{eq:derivative1} and \eqref{eq:derivative2}.
\end{proof}

If $m=2$, put
\begin{equation*}
\frac{\partial}{\partial t}:=\frac12\left(\frac{\partial}{\partial t^1}-i\frac{\partial}{\partial t^2}\right), \
\frac{\partial}{\partial \bar t}:=\frac12\left(\frac{\partial}{\partial t^1}+i\frac{\partial}{\partial t^2}\right).
\end{equation*}
Let
\begin{equation*}
V=\frac{\partial}{\partial t}-\sum v^\alpha\frac{\partial}{\partial x^\alpha}
\end{equation*}
be a smooth vector field on a neighborhood of $[D]$. If $V(\rho)$ vanishes on $\delta D$, then both $2{\rm Re}V$ and $-2{\rm Im}V$ satisfy the condition of Theorem~\ref{th:vfi}. Thus we have:

\begin{corollary}\label{co:vfi} Assume that $V(\rho)$ vanishes on $\delta D$,  we have
\begin{equation*}
 \frac{\partial F}{\partial t}(t)=\int_{D_t} L^t_{V}\left(f(t,x)dx\right), \
 \frac{\partial F}{\partial \bar t}(t)=\int_{D_t} L^t_{\overline V}\left(f(t,x)dx\right),
\end{equation*}
for every $t\in \mathbb B$.
\end{corollary}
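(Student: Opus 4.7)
The plan is to reduce the corollary directly to two applications of Theorem~\ref{th:vfi}, exactly as hinted in the sentence preceding the statement. First I would set $V_1 := 2\,\mathrm{Re}\,V$ and $V_2 := -2\,\mathrm{Im}\,V$; these are real smooth vector fields on a neighborhood of $[D]$, and since $P_*V = \partial/\partial t = \tfrac12(\partial_{t^1} - i\partial_{t^2})$ one checks that $P_*V_1 = \partial/\partial t^1$ and $P_*V_2 = \partial/\partial t^2$, so each $V_j$ has the coordinate form required by Theorem~\ref{th:vfi}. Because $\rho$ is real-valued, the complex function $V(\rho)$ decomposes as $V_1(\rho) = 2\,\mathrm{Re}\,V(\rho)$ and $V_2(\rho) = -2\,\mathrm{Im}\,V(\rho)$, so the hypothesis $V(\rho)|_{\delta D} = 0$ immediately yields $V_j(\rho)|_{\delta D} = 0$ for $j = 1, 2$, placing us squarely in the setting of Theorem~\ref{th:vfi}.

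Applying Theorem~\ref{th:vfi} to each $V_j$ gives, for every $t \in \B$,
\[
\frac{\partial F}{\partial t^j}(t) = \int_{D_t} L^t_{V_j}\bigl(f(t,x)\,dx\bigr), \qquad j = 1, 2.
\]
To assemble the two formulas in the corollary I would then take complex linear combinations: from $V = \tfrac12(V_1 - iV_2)$ and $\bar V = \tfrac12(V_1 + iV_2)$, the $\C$-linearity of the Lie derivative in the vector-field slot (inherited by $L^t$ via pullback through $i_t$) yields $L^t_V = \tfrac12(L^t_{V_1} - iL^t_{V_2})$ and the conjugate identity for $\bar V$. Combining these with $\partial_t = \tfrac12(\partial_{t^1} - i\partial_{t^2})$ and $\partial_{\bar t} = \tfrac12(\partial_{t^1} + i\partial_{t^2})$ and integrating assembles into the two claimed identities. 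I do not anticipate any genuine obstacle: the corollary is a purely formal repackaging of Theorem~\ref{th:vfi} in complex coordinates, and the only thing worth checking carefully is the bookkeeping of signs that relates $V, \bar V$ to $V_1, V_2$.
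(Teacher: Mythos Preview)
Your proposal is correct and follows exactly the approach the paper indicates in the sentence immediately preceding the corollary: decompose $V$ into $2\,\mathrm{Re}\,V$ and $-2\,\mathrm{Im}\,V$, observe that each satisfies the hypothesis of Theorem~\ref{th:vfi}, and then recombine. The paper does not spell out the linear-combination bookkeeping you outline, but there is nothing beyond that one-sentence reduction in its treatment, so your write-up is just a fleshed-out version of the same argument.
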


By Cartan's formula, $L_{V_j}=d\delta_{V_j}+\delta_{V_j}d$, and Theorem~\ref{th:vfi}, we have
\begin{equation}\label{eq:1stvariational}
     \frac{\partial F}{\partial t^j}(t)=\int_{\partial D_t}f(t,x)\delta_{V_j}dx+\int_{D_t}\frac{\partial f}{\partial t^j}(t,x)dx.
\end{equation}
One may also use Theorem~\ref{th:vfi} to compute variation of $\int_{\partial D_t}$. Fix a smooth form
\begin{equation*}
    g=\sum g^\alpha(t,x)\widehat{dx^\alpha}
\end{equation*}
on a neighborhood of $[D]$, where $\widehat{dx^\alpha}$ satisfies $dx^\alpha\wedge \widehat{dx^\alpha}=dx$. The fibre integrals
\begin{equation*}
    G(t):=\int_{\partial D_t}g
\end{equation*}
depend smoothly on $t\in \mathbb B$. Theorem~\ref{th:vfi} implies that

\begin{corollary}\label{co:vfibdy} Assume that $V_j(\rho)$ vanishes on $\delta D$,  we have
\begin{equation*}
 \frac{\partial G}{\partial t^j}(t)=\int_{\partial D_t}\delta_{V_j}dg=\int_{\partial D_t} L_{V_j}^t g,
\end{equation*}
for every $t\in \mathbb B$.
\end{corollary}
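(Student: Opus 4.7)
The plan is to reduce the statement to Theorem~\ref{th:vfi} via Stokes' theorem. Since $g$ is a smooth $(n-1)$-form on a neighbourhood of $[D]$, fibre-wise Stokes gives $G(t)=\int_{\partial D_t}g=\int_{D_t}dg$. The restriction $i_t^*(dg)$ is a smooth top form on $D_t$, so Theorem~\ref{th:vfi}, whose statement and proof extend without change from $f(t,x)\,dx$ to any smooth $n$-form on $[D]$, applies and yields
\begin{equation*}
\frac{\partial G}{\partial t^j}(t)=\int_{D_t} L^t_{V_j}(dg).
\end{equation*}

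For the first claimed equality, I would invoke Cartan's formula $L_{V_j}=d\delta_{V_j}+\delta_{V_j}d$ together with $d^2=0$ to obtain $L_{V_j}(dg)=d(\delta_{V_j}dg)$. Since $i_t^*$ commutes with $d$, $L^t_{V_j}(dg)=d\bigl(i_t^*\delta_{V_j}dg\bigr)$ is exact on $D_t$, and a second application of Stokes' theorem on $D_t$ converts the volume integral into the boundary integral $\int_{\partial D_t}\delta_{V_j}dg$.

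For the second equality, Cartan's formula applied directly to $g$ gives $L_{V_j}g=d(\delta_{V_j}g)+\delta_{V_j}(dg)$. Pulling back to $\partial D_t$ and integrating, the exact contribution $d\bigl(i_{\partial D_t}^*\delta_{V_j}g\bigr)$ vanishes by Stokes' theorem on the closed manifold $\partial D_t$, yielding $\int_{\partial D_t}L^t_{V_j}g=\int_{\partial D_t}\delta_{V_j}dg$ as required.

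There is no serious obstacle; the whole argument is three applications of Stokes together with Cartan's formula. The only detail requiring a moment of care is the extension of Theorem~\ref{th:vfi} from $f(t,x)\,dx$ to general smooth $n$-forms on $[D]$, which is immediate because the $dt^k$-containing pieces of any such form are annihilated by $i_t^*$ and, since $L_{V_j}dt^k=0$, remain so after applying $L_{V_j}$, reducing matters to the scalar case already handled.
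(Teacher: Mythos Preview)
Your proposal is correct and follows essentially the same route as the paper: Stokes to rewrite $G(t)=\int_{D_t}dg$, Theorem~\ref{th:vfi} for the derivative, then Cartan's formula plus two more applications of Stokes to obtain the two boundary expressions. The only cosmetic difference is that the paper phrases the reduction via the identity $i_t^*L_{V_j}d^tg=i_t^*L_{V_j}dg$, whereas you handle the same point by observing that the $dt^k$-components of $dg$ are killed by $i_t^*$ and are preserved under $L_{V_j}$; these are two ways of saying the same thing.
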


\begin{proof} By Stokes formula and Theorem~\ref{th:vfi}, we have
\begin{equation*}
     \frac{\partial G}{\partial t^j}(t)=\int_{\partial D_t} L_{V_j}^t d^tg,
\end{equation*}
where $d^t$ is the restriction of $d$ to $D_t$. Since
\begin{equation*}
    i_t^*L_{V_j}d^tg=i_t^*L_{V_j}dg,
\end{equation*}
we have
\begin{equation*}
     \frac{\partial G}{\partial t^j}(t)=\int_{D_t}d\delta_{V_j}dg
     =\int_{\partial D_t}\delta_{V_j}dg=\int_{\partial D_t}L_{V_j}^t g.
\end{equation*}
The proof is complete.
\end{proof} 

\section{Variation of Bergman kernels}

In this section we shall prove our results on the Bergman kernel stated in section 2.

\subsection{Stability of Bergman kernels}

We shall give an informal proof of Lemma~\ref{le:stability} by using regularity properties of \textbf{full} $\dbar$-Neumann problem (see \eqref{eq:full-Neumann} below). By Lemma 2.1 in \cite{Bern06}, stability of Bergman kernels follows directly from stability of solutions $u^t$ of a family of $\dbar$-Neumann problems $\square^t(\cdot)=f^t$. However, in general, it is not easy to show that $u^t$ is stable, i.e., if we want to use
\begin{equation}\label{eq:stability}
    ||\square^t(u^t-u^s)||=||f^t-f^s-(\square^t-\square^s)u^s||
\end{equation}
to estimate $||u^t-u^s||$, we have to find a natural connection between the domain of $\square^t$ and the domain of $\square^s$ (i.e., $u^s$ may not be in the domain of $\square^t$), but then we go back to regularity properties of $\square_b$-equation (see \cite{Kuranishi}).

Hamilton \cite{Hamilton79} found a more natural way to study regularity properties of families of non-coercive boundary value problems (not only for $\dbar$-Neumann problem). For reader's convenience we give a sketch description of Hamilton's idea.

Instead of considering $\square^t$ (whose domain satisfies the $\dbar$-Neumann condition), Hamilton considered the full Laplace opeartor $\widetilde{\square^t}$ (whose domain contains all forms smooth up to the boundary). Let $u^t$ be a form smooth up to the boundary, in  general, the Sobolev norm of $\widetilde{\square^t}(u^t)$ could not control the Sobolev norm of $u^t$. In fact, $u^t$ has to be in the domain of $\square^t$ (see \cite{Folland-Kohn72}). Thus two more operators (sending forms on $\overline{D_t}$ to forms on the boundary of $D_t$) are used in Hamilton's paper, i.e., he considered the full $\dbar$-Neumann problem
\begin{equation}\label{eq:full-Neumann}
    \mathfrak{S}^t(\cdot):=\left(\widetilde{\square^t},(\dbar^t\rho)\vee,(\dbar^t\rho)\vee\dbar^t\right)(\cdot)=f^t,
\end{equation}
where $(\dbar^t\rho)\vee\cdot:=(\dbar^t\rho\wedge\cdot)^*\cdot$. Now the domain of $\mathfrak{S}^t$ is $\C^{\infty}_{\bullet,\bullet}(\overline{D_t})$ for each $t$. Using $C^{\infty}$ trivialization mapping $\mathbb B\times D_0\to D$, the domain of $\mathfrak{S}^t$ can be seen as a fixed space $\C^{\infty}_{\bullet,\bullet}(\overline{D_0})$. Thus \eqref{eq:stability} applies. The only thing left to do is to show that universal constant (i.e., independent of $t$) works in the basic estimates for $\mathfrak{S}^t$. It is one of main results in \cite{Hamilton79}. The interested reader is referred to that paper for further information and a clear proof.

\subsection{$L^2$-estimates for $\dbar a=\partial_{\phi}b+c$}

Let $X$ be an $n-$dimensional complex manifold with complete K\"ahler metric $\omega$. Let $\phi$ be a smooth plurisubharmonic function on $X$. Denote by $\square'$ (resp. $\square''$) the $\partial_\phi$-Laplace (resp. $\dbar$-Laplace). Let $a$ (resp. $b$) be a smooth $L^2$-integrable $(n,0)$ (resp. $(n-1,1)$) form on $X$ such that $\dbar a$ (resp. $\partial_\phi b$) is $L^2$ on $X$ (here $L^2$ means $L^2$-integrable with respect to $\omega$ and $e^{-\phi}$). Assume that $a\bot\ker\dbar$ and $b$ is a $\dbar$-closed primitive form. Put $c=\dbar a-\partial_{\phi}b$. Since $\partial^*_{\phi}=-*\dbar*$, we know that $b$ is $\partial^*_{\phi}$-closed. Thus $b$ has orthogonal decomposition
\begin{equation*}
    b=b_1+\partial^*_{\phi}b_2,
\end{equation*}
where $b_1$ is the $\square'$-harmonic part of $b$. We shall prove the following Lemma (due to Berndtsson \cite{Bern11}) for reader's convenience.

\begin{lemma}\label{le:L2-1} If $\phi$ and $c$ are zero on $X$ then $||b||^2=||a||^2+||b_1||^2$.
\end{lemma}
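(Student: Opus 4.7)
The plan is to reduce the identity to two easy pieces plus one invocation of the K\"ahler identities. Writing $b = b_1 + \partial^* b_2$ and noting that on a complete K\"ahler manifold every $L^2$ $\square'$-harmonic form satisfies $\partial b_1 = \partial^* b_1 = \dbar b_1 = \dbarstar b_1 = 0$ (via $0 = \langle \square' b_1, b_1 \rangle = \|\partial b_1\|^2 + \|\partial^* b_1\|^2$ together with $\square' = \square''$), one gets $b_1 \perp \partial^* b_2$ for free, and hence
\begin{equation*}
\|b\|^2 = \|b_1\|^2 + \|\partial^* b_2\|^2.
\end{equation*}
The lemma therefore reduces to showing $\|a\|^2 = \|\partial^* b_2\|^2$.

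Since $\phi = 0$ and $c = 0$, the defining relation $c = \dbar a - \partial_\phi b$ becomes $\dbar a = \partial b = \partial\partial^* b_2$ (using $\partial b_1 = 0$). Integrating by parts,
\begin{equation*}
\|\partial^* b_2\|^2 = \langle \partial\partial^* b_2, b_2\rangle = \langle \dbar a, b_2\rangle = \langle a, \dbarstar b_2\rangle,
\end{equation*}
so it suffices to prove $\langle a, \dbarstar b_2 - a\rangle = 0$. Because $a \perp \ker \dbar$ by hypothesis, this will follow once I verify that the $(n,0)$-form $\dbarstar b_2 - a$ is $\dbar$-closed.

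To establish this I combine two uses of the K\"ahler identities. Applying $\dbarstar = -i[\Lambda,\partial]$ to $b$, together with $\Lambda b = 0$ (primitivity), $\partial b = \dbar a$, and $\Lambda a = 0$ (bidegree), a direct computation yields $\dbarstar b = -\partial^* a$; expanding through $b = b_1 + \partial^* b_2$ with $\dbarstar b_1 = 0$ and the graded anticommutation $\dbarstar\partial^* = -\partial^*\dbarstar$ gives $\partial^*(\dbarstar b_2 - a) = 0$. To upgrade $\partial^*$-closedness to $\dbar$-closedness I exploit a special feature of $(n,0)$-forms $\alpha$: since $\partial\alpha$ and $\dbarstar\alpha$ vanish for bidegree reasons, $\square'\alpha = \partial\partial^*\alpha$ and $\square''\alpha = \dbarstar\dbar\alpha$, so the K\"ahler identity $\square' = \square''$ followed by integration by parts yields $\|\partial^*\alpha\| = \|\dbar\alpha\|$, and in particular $\partial^*\alpha = 0 \Leftrightarrow \dbar\alpha = 0$. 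Applied to $\alpha = \dbarstar b_2 - a$ this closes the argument.

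The main technical obstacle is justifying the $L^2$ integrations by parts throughout: the Hodge splitting $b = b_1 + \partial^* b_2$, the identity $\square' = \square''$ restricted to the relevant domains, and the formula $\dbarstar b = -\partial^* a$ all require the intermediate forms to lie in the correct domains of the adjoint operators. This is precisely where the completeness of $\omega$ intervenes, through the standard cutoff/density arguments of H\"ormander--Demailly type. Once these manipulations are legal, the computations above chain together to give $\|a\|^2 = \|\partial^* b_2\|^2$, which combined with the initial orthogonal splitting yields $\|b\|^2 = \|a\|^2 + \|b_1\|^2$.
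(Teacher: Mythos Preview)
Your argument is correct, and the key ideas (the orthogonal splitting $b=b_1+\partial^*b_2$, the identity $\square'=\square''$, and completeness for the integrations by parts) are the same ones the paper relies on. However, the route you take is noticeably more roundabout than the paper's. The paper observes directly that since $a\perp\ker\dbar$ and $\dbar a=\partial b$, one has $a=\dbar^*G\partial b$ for the $\square''$-Green operator $G$; then $\|a\|^2=\langle G\partial b,\partial b\rangle$, and because $\square''=\square'$ one identifies $G\partial b$ with $b_2$, giving $\|a\|^2=\|\partial^*b_2\|^2$ in one stroke. In particular the paper actually obtains the \emph{equality} $a=\dbar^*b_2$, whereas you establish only the weaker (but sufficient) fact that $\dbar^*b_2-a\in\ker\dbar$, and you reach it through a chain of K\"ahler identities ($\dbar^*b=-\partial^*a$, anticommutation of $\partial^*$ and $\dbar^*$, and the $(n,0)$-form equivalence $\partial^*\alpha=0\Leftrightarrow\dbar\alpha=0$). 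Your approach has the minor advantage of avoiding explicit reference to the Green operator, but the paper's version is considerably shorter and yields a sharper intermediate statement.
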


\begin{proof} Denote by $G$ the Green operator with respect to $\square''$. Since $a$ is $L^2$-minimal, we have
\begin{equation*}
    a=\dbar^*G\partial b.
\end{equation*}
Thus
\begin{equation*}
    ||a||^2=\langle\langle G\partial b,\partial b\rangle\rangle.
\end{equation*}
Since $\omega$ is K\"ahler and $\phi\equiv 0$, we have $\square''=\square'$. Thus $G\partial b=b_2$, which implies that $||a||^2=||\partial^*b_2||^2=||b||^2-|b_1||^2$. The proof is complete.
\end{proof}

We remark that by a similar argument, one may show that
\begin{equation}\label{eq:L2-1}
    \langle\langle b^1, b^2\rangle\rangle=\langle\langle a^1,a^2\rangle\rangle+\langle\langle b_1^1, b_1^2\rangle\rangle.
\end{equation}

If $c$ is zero and $\phi$ is not assumed to be zero. Using K\"ahler identity $\square''-\square'=[i\partial\dbar\phi,\Lambda]$, where $\Lambda$ denotes the adjoint of $\omega\wedge\cdot$, one can also get an equality similar as \eqref{eq:L2-1}. If $c$ is not zero, we failed to find an equality as \eqref{eq:L2-1}. However, by using H\"ormander's $L^2$-estimates, we get an inequality between $a$, $b$ and $c$.

\begin{lemma}\label{le:L2-2} If $i\partial\dbar\phi>0$ on $X$ then $||a||^2\leq||b||^2-||b_1||^2+||c||^2_{i\partial\dbar\phi}$.
\end{lemma}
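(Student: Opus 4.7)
The plan is to represent $a$ as $\dbar^{*}w$ for a specific $\dbar$-closed $(n,1)$-form $w$, dualize the equation $\dbar a=\partial_\phi b+c$ against $w$, and close the estimate with two Cauchy--Schwarz inequalities together with the Bochner--Kodaira--Nakano identity.

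First I would use completeness of $\omega$ together with $i\partial\dbar\phi>0$ to invoke the H\"ormander--Demailly $L^{2}$-theory: since $\dbar a\perp\ker\square''$, the Green operator $G''$ for $\square''$ produces $w:=G''(\partial_\phi b+c)\in\mathrm{Dom}(\dbar^{*})$. Because $\dbar(\partial_\phi b+c)=\dbar\circ\dbar\,a=0$ and $G''$ commutes with $\dbar$, one has $\dbar w=0$, and then the identity $\square'' w=\dbar\dbar^{*}w=\dbar a$ combined with $a\perp\ker\dbar$ forces $\dbar^{*}w=a$. Hence
\[
\|a\|^{2}=\langle\langle a,\dbar^{*}w\rangle\rangle=\langle\langle\dbar a,w\rangle\rangle=\langle\langle\partial_\phi b,w\rangle\rangle+\langle\langle c,w\rangle\rangle.
\]

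Next I would integrate by parts to rewrite the first summand as $\langle\langle b,\partial_\phi^{*}w\rangle\rangle$. Since the $\square'$-harmonic part $b_{1}$ satisfies $\partial_\phi b_{1}=0$, $\langle\langle b_{1},\partial_\phi^{*}w\rangle\rangle=\langle\langle\partial_\phi b_{1},w\rangle\rangle=0$ and only $\langle\langle b-b_{1},\partial_\phi^{*}w\rangle\rangle$ survives. Cauchy--Schwarz (with $\|b-b_{1}\|^{2}=\|b\|^{2}-\|b_{1}\|^{2}$ by the orthogonal decomposition) together with the pointwise-then-integrated weighted Cauchy--Schwarz for the $c$-term, followed by the numerical bound $(\alpha_{1}\beta_{1}+\alpha_{2}\beta_{2})^{2}\leq(\alpha_{1}^{2}+\alpha_{2}^{2})(\beta_{1}^{2}+\beta_{2}^{2})$, give
\[
|\langle\langle a,\dbar^{*}w\rangle\rangle|^{2}\leq\bigl(\|b\|^{2}-\|b_{1}\|^{2}+\|c\|^{2}_{i\partial\dbar\phi}\bigr)\bigl(\|\partial_\phi^{*}w\|^{2}+\langle\langle[i\partial\dbar\phi,\Lambda]w,w\rangle\rangle\bigr).
\]
The Bochner--Kodaira--Nakano identity $\square''=\square'+[i\partial\dbar\phi,\Lambda]$ on $(n,1)$-forms together with $\dbar w=0$ yields $\|\partial_\phi^{*}w\|^{2}+\langle\langle[i\partial\dbar\phi,\Lambda]w,w\rangle\rangle\leq\|\dbar^{*}w\|^{2}=\|a\|^{2}$, and since $\langle\langle a,\dbar^{*}w\rangle\rangle=\|a\|^{2}$, dividing by $\|a\|^{2}$ produces the claimed inequality.

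The main obstacle is justifying the integration by parts $\langle\langle\partial_\phi b,w\rangle\rangle=\langle\langle b,\partial_\phi^{*}w\rangle\rangle$, since $w$ is a priori only in $\mathrm{Dom}(\dbar)\cap\mathrm{Dom}(\dbar^{*})$ and not necessarily in $\mathrm{Dom}(\partial_\phi^{*})$. This is precisely where completeness of $\omega$ is essential: Demailly's density lemma (same reference invoked earlier) approximates $w$ in the graph norm by compactly supported smooth forms, for which all formal identities hold, after which one passes to the limit. The same device validates $\partial_\phi b_{1}=0$ from the $L^{2}$ statement $\square'b_{1}=0$ and the primitivity identity $\partial_\phi^{*}b=0$ from $\dbar b=0$ and $\Lambda b=0$, which in turn secures $b\perp\mathrm{image}(\partial_\phi)$ and hence the decomposition $b=b_{1}+\partial_\phi^{*}b_{2}$ used throughout.
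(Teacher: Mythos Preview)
Your proof is correct and uses the same ingredients as the paper---the identity $\square''=\square'+[i\partial\dbar\phi,\Lambda]$, a double Cauchy--Schwarz splitting $b-b_1$ from $c$, and completeness of $\omega$ to justify the integrations by parts. The only difference is packaging: the paper runs the dual H\"ormander argument, testing $u=\partial_\phi(b-b_1)+c$ against every compactly supported $(n,1)$-form $f$ to obtain
\[
|\langle\langle f,u\rangle\rangle|^2\le\bigl(\|\dbar^*f\|^2+\|\dbar f\|^2\bigr)\bigl(\|b-b_1\|^2+\|c\|^2_{i\partial\dbar\phi}\bigr),
\]
extends this by completeness to all $f\in\mathrm{Dom}(\dbar)\cap\mathrm{Dom}(\dbar^*)$, and then invokes the existence theorem for the minimal solution; you instead test against the single form $w=G''u$ realizing $a=\dbar^*w$. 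One caveat worth flagging: applying $G''$ to $u$ presupposes that $\dbar a$ lies in the range of $\square''$ (equivalently that $a\in\mathrm{Im}\,\dbar^*$, not merely its closure), which does not follow from the purely pointwise hypothesis $i\partial\dbar\phi>0$---closed range would require a uniform lower bound. The paper's duality formulation never constructs $w$ and so sidesteps this; your version is repairable by a routine approximation, but as written it is slightly looser on precisely this point, while being more careful than the paper on the $\partial_\phi$-integration by parts.
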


\begin{proof} Put $u=\partial_\phi b+c$. By H\"ormander's theory, it suffices to estimate
\begin{equation*}
    \langle\langle f,u\rangle\rangle=\langle\langle f,\partial_\phi (b-b_1)+c\rangle\rangle,
\end{equation*}
where $f$ is an arbitrary smooth $(n,1)$-form with compact support. The right hand side of the above inequality is $\langle\langle\partial_\phi^*f, (b-b_1)\rangle\rangle+\langle\langle f,c\rangle\rangle$. Thus
\begin{equation*}
    |\langle\langle f,u\rangle\rangle|^2\leq\left(||\partial^*_{\phi}f||^2+
    \langle\langle[i\partial\dbar\phi,\Lambda]f,f\rangle\rangle\right)
(||b-b_1||^2+||c||^2_{i\partial\dbar\phi}).
\end{equation*}
Since $\square''=\square'+[i\partial\dbar\phi,\Lambda]$, the right hand side of the above inequality is equal to
\begin{equation*}
    (||\dbar^*f||^2+||\dbar f||^2)(||b-b_1||^2+||c||^2_{i\partial\dbar\phi}).
\end{equation*}
Since $\omega$ is complete, we have
\begin{equation*}
    |\langle\langle f,u\rangle\rangle|^2 \leq (||\dbar^*f||^2+||\dbar f||^2)(||b-b_1||^2+||c||^2_{i\partial\dbar\phi})
\end{equation*}
for every $f$ in the domain of $\dbar\oplus\dbar^*$. Since $a$ is the $L^2$-minimal solution of $\dbar(\cdot)=u$. The proof is complete.
\end{proof}

\subsection{Second order variational formulas}

We shall prove Theorem~\ref{th:Bergman0} in this section. In order to use $L^2$-estimates in the previous section, we have to choose a suitable vector field, i.e., to prove our Key Lemma.

\begin{proof}[Proof of Key Lemma] Put $\psi=-\log-\rho$. By definition,
\begin{equation*}
    V_j^\psi=\partial/\partial t^j-\sum v_j^\alpha\partial/\partial \mu^\alpha,
\end{equation*}
where
\begin{equation*}
    v_j^\alpha=\sum\psi_{j\bar\beta}\psi^{\bar\beta\alpha}.
\end{equation*}
If $n=1$, it is easy to check that
\begin{equation}\label{eq:keylemma}
    V_j^\psi:=\frac{\partial}{\partial t^j}-\frac{\rho_j\rho_{\bar\mu}-\rho\rho_{j\bar\mu}}{|\rho_\mu|^2-\rho\rho_{\mu\bar\mu}}
    \frac{\partial}{\partial \mu},
\end{equation}
thus $V_j^\psi$ satisfies our Key Lemma. If $n\geq2$, fix $x_0\in\partial D_0$. Choosing suitable local coordinates around $x_0$, we may assume that
\begin{equation*}
    \left(\rho_{\alpha\bar \beta}(x_0)\right)=I_n, \ \rho_\nu(x_0)=0, \ \forall \ \nu\geq2,
\end{equation*}
where $I_n$ is the identity matrix. Thus
\begin{equation*}
    v_j^1(x_0)=\frac{\rho_j\rho_{\bar1}-\rho\rho_{j\bar1}}{|\rho_1|^2-\rho}(x_0)=\frac{\rho_j}{\rho_1}(x_0), \  v_j^\alpha(x_0)=\rho_{j\bar \alpha}(x_0), \ \forall \ \alpha\geq2,
\end{equation*}
which implies that $V_j^{\psi}$ is smooth up to the boundary (one may also prove this by force). Now
\begin{equation*}
    V_j^{\psi}(\rho)(x_0)=\rho_j(x_0)-\sum v_j^\alpha\rho_\alpha(x_0)=\rho_j(x_0)-\rho_j(x_0)=0.
\end{equation*}
The proof of Key Lemma is complete.
\end{proof}

Now we can use the vector fields $V_j^{\psi}$ in our Key Lemma to compute variation of Bergman kernels. By \eqref{eq:n0-n11} and Lemma~\ref{le:L2-1}, the last term in \eqref{eq:variation-second} is equal to the last term in \eqref{eq:Bergman0}. Thus Theorem~\ref{th:Bergman0} follows from the following (integration by parts) lemma:

\begin{lemma}\label{le:bdy} If $\phi\equiv0$ then we have
\begin{equation}\label{eq:bdy1}
    \langle\langle [L_j,L_{\bar k}]\cdot\bar\eta,\cdot\bar\zeta\rangle\rangle
    =\int_{\partial D_t} b_{j\bar k}(\rho) \langle \cdot\bar\eta, \cdot\bar\zeta\rangle d \sigma.
\end{equation}
For general $\phi$ with well defined geodesic curvature $c_{j\bar k}(\phi)$, we have
\begin{equation}\label{eq:bdy2}
    \langle\langle [L_j,L_{\bar k}]\cdot\bar\eta,\cdot\bar\zeta\rangle\rangle
    =\int_{\partial D_t} b_{j\bar k}(\rho) \langle \cdot\bar\eta, \cdot\bar\zeta\rangle d \sigma+
    \langle\langle c_{j\bar k}(\phi,V)\cdot\bar\eta,\cdot\bar\zeta\rangle\rangle,
\end{equation}
where
\begin{equation*}
    c_{j\bar k}(\phi,V):=c_{j\bar k}(\phi)+\langle(\op^t\phi)_{V^{\psi}_j}, (\op^t\phi)_{V^{\psi}_k}\rangle_{i\partial^t\op^t\phi}.
\end{equation*}
\end{lemma}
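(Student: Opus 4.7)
The plan is to reduce the commutator $[L_j,L_{\bar k}]$ acting on $\cdot\bar\eta$ to a Lie derivative plus a multiplication operator, apply Cartan's formula and Stokes' theorem to move the Lie-derivative piece to the boundary, and identify the resulting surface density via the Key Lemma. Abbreviate $V_j:=V_j^{-\log-\rho}$ and $v_j^\alpha=\psi_{j\bar\beta}\psi^{\bar\beta\alpha}$ for $\psi=-\log(-\rho)$. Expanding $L_jf=e^\phi L^t_{V_j}(e^{-\phi}f)=L^t_{V_j}f-(V_j\phi)f$ and $L_{\bar k}f=L^t_{\bar V_k}f$ and iterating yields the operator identity
\begin{equation*}
[L_j,L_{\bar k}]=L^t_{[V_j,\bar V_k]}+\bar V_k(V_j\phi),
\end{equation*}
the second summand acting by multiplication. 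A direct coordinate computation of the bracket gives $[V_j,\bar V_k]^{1,0}=\bar V_k(v_j^\alpha)\partial_{\mu^\alpha}$.

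For the Lie-derivative piece, set $X:=[V_j,\bar V_k]$. Since $d^t\cdot\bar\eta=0$ and $d^t\overline{\cdot\bar\zeta}=0$, Cartan's formula gives $L^t_X\cdot\bar\eta\wedge\overline{\cdot\bar\zeta}=d^t(\delta_X\cdot\bar\eta\wedge\overline{\cdot\bar\zeta})$. Combining with $d^t(e^{-\phi}\omega)=e^{-\phi}d^t\omega-e^{-\phi}d^t\phi\wedge\omega$, Stokes' theorem, and the bidegree identity $\partial^t\phi\wedge\delta_X\cdot\bar\eta=(X^{1,0}\phi)\cdot\bar\eta$ (obtained from $\partial^t\phi\wedge\cdot\bar\eta=0$), one obtains
\begin{equation*}
\langle\langle L^t_X\cdot\bar\eta,\cdot\bar\zeta\rangle\rangle=i^{n^2}\int_{\partial D_t}e^{-\phi}\delta_X\cdot\bar\eta\wedge\overline{\cdot\bar\zeta}+\langle\langle(X^{1,0}\phi)\cdot\bar\eta,\cdot\bar\zeta\rangle\rangle.
\end{equation*}
The standard surface contraction of a volume form by a $(1,0)$-vector identifies the boundary integrand as $\langle\cdot\bar\eta,\cdot\bar\zeta\rangle(X^\alpha\rho_\alpha/|\partial\rho|)\,d\sigma$. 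By the Key Lemma, $V_j(\rho)=\rho_j-v_j^\alpha\rho_\alpha$ vanishes on $\partial D$; since $\bar V_k$ is tangent there, $\bar V_k(V_j\rho)|_{\partial D_t}=0$ and the product rule yields
\begin{equation*}
\bar V_k(v_j^\alpha)\rho_\alpha\big|_{\partial D_t}=\rho_{j\bar k}-\bar{v_k^\beta}\rho_{j\bar\beta}-v_j^\alpha\rho_{\alpha\bar k}+v_j^\alpha\bar{v_k^\beta}\rho_{\alpha\bar\beta}=\langle V_j,V_k\rangle_{i\partial\dbar\rho},
\end{equation*}
so $X^\alpha\rho_\alpha/|\partial\rho|=b_{j\bar k}(\rho)$. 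When $\phi\equiv0$, the interior correction and the multiplication term vanish, giving \eqref{eq:bdy1}.

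For general $\phi$, summing the interior correction $X^{1,0}\phi=\bar V_k(v_j^\alpha)\phi_\alpha$ with the multiplication term $\bar V_k(V_j\phi)=\bar V_k(\phi_j-v_j^\alpha\phi_\alpha)$, the $\bar V_k(v_j^\alpha)\phi_\alpha$ cross-term cancels and the remainder is
\begin{equation*}
(\partial\dbar\phi)(V_j,\bar V_k)=\phi_{j\bar k}-\bar{v_k^\beta}\phi_{j\bar\beta}-v_j^\alpha\phi_{\alpha\bar k}+v_j^\alpha\bar{v_k^\beta}\phi_{\alpha\bar\beta}.
\end{equation*}
To recognize this as $c_{j\bar k}(\phi,V)$, write $V_j=V_j^\phi+W_j$ with $W_j:=V_j-V_j^\phi$ purely fibre-directional. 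Berndtsson's formula $\delta_{V_j^\phi}(\partial\dbar\phi)=c_{j\bar k}(\phi)\,d\bar t^k$ gives $(\partial\dbar\phi)(V_j^\phi,\bar V_k^\phi)=c_{j\bar k}(\phi)$, and the vanishing of the fibre $d\bar\mu$-part of $\delta_{V_j^\phi}\partial\dbar\phi$ (the $(0,1)$-horizontal condition $V_j^\phi\phi_{\bar\alpha}=0$) yields both the cross-term vanishings $(\partial\dbar\phi)(V_j^\phi,\bar W_k)=(\partial\dbar\phi)(W_j,\bar V_k^\phi)=0$ and the identity $V_j\phi_{\bar\alpha}=W_j^\beta\phi_{\beta\bar\alpha}$. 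A short index computation then matches $(\partial\dbar\phi)(W_j,\bar W_k)$ with $\langle(\dbar^t\phi)_{V_j},(\dbar^t\phi)_{V_k}\rangle_{i\partial\dbar\phi}$, proving \eqref{eq:bdy2}. The main technical step is the boundary-density identification, which rests on the nontrivial cancellation pattern made possible by the quasi-K\"ahler-Einstein choice $\psi=-\log(-\rho)$ through the Key Lemma.
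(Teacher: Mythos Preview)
Your argument is correct and follows essentially the same route as the paper: reduce $[L_j,L_{\bar k}]$ to a Lie-bracket piece plus a $\phi$-multiplication piece, push the Lie-bracket piece to $\partial D_t$ via Stokes, and identify the boundary density using the Key Lemma through $\bar V_k(V_j\rho)|_{\partial D_t}=0$. The one organizational difference is in the interior term: the paper verifies $\overline VV\phi - \delta_{[V,\overline V]}\partial^t\phi = c(\phi)+\langle(\dbar^t\phi)_V,(\dbar^t\phi)_V\rangle_{i\partial^t\dbar^t\phi}$ by a direct coordinate computation, whereas you obtain $(\partial\dbar\phi)(V_j,\bar V_k)=c_{j\bar k}(\phi,V)$ more geometrically by splitting $V_j=V_j^{\phi}+W_j$ and invoking Berndtsson's formula to kill the cross terms; this is a clean alternative but not a genuinely different strategy.
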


\begin{proof} We shall only prove the following special case: $j=k=m=1$, since the general case follows by a similar argument.

Put $V=V_1^{\psi}$. Notice that $[L_1,L_{\bar 1}]=\overline VV\phi+[L^t_{V},L^t_{\overline{V}}]$. By Cartan's formula, we have $(L_V-L_V^t)\cdot\bar\eta=\delta_{d_tV}\cdot\bar\eta$, where $d_t:=\frac{\partial}{\partial t}\otimes dt+\frac{\partial}{\partial \bar t}\otimes d\bar t$. Thus
\begin{equation*}
    \left(L_{\overline V}^t L_V^t \cdot\bar\eta\right)_{(n,0)} =
    \left(L_{\overline V} L_V \cdot\bar\eta\right)_{(n,0)},
\end{equation*}
where $(\cdot)_{(n,0)}$ means the $(n,0)$-component of $i_t^*(\cdot)$. Since $L_{\overline V}\cdot\bar\eta=\cdot\bar\eta_{\bar t}=L_{\overline V}^t\cdot\bar\eta$, we have
\begin{equation*}
    \left([L_V^t,L_{\overline V}^t] \cdot\bar\eta\right)_{(n,0)} =
    \left([L_V,L_{\overline V}] \cdot\bar\eta\right)_{(n,0)}.
\end{equation*}
Since $[L_V,L_{\overline V}]=L_{[V,\overline V]}$, we have
\begin{equation*}
    \left([L_V^t,\mathcal L_{\overline V}^t] \cdot\bar\eta\right)_{(n,0)}
    =\partial^t\delta_{[V,\overline V]}\cdot\bar\eta.
\end{equation*}
Put $V=\frac{\partial}{\partial t}-\sum v^\alpha\frac{\partial}{\partial \mu^\alpha}$, we have
\begin{eqnarray*}
  \overline VV\phi-\langle(\op^t\phi)_V, (\op^t\phi)_V\rangle_{i\partial^t\op^t\phi} &=& c(\phi)-\sum\left(v^\alpha_{\bar t}\phi_\alpha-\overline{v^\beta}v^\alpha_{\bar\beta}\phi_\alpha\right) \\
   &=& c(\phi)-\delta_{[V,\overline V]}\partial^t\phi,
\end{eqnarray*}
where $c(\phi)$ is short for $c_{1\bar 1}(\phi)$. Thus
\begin{equation*}
    \left([L_1, L_{\bar 1}]\cdot\bar\eta\right)_{(n,0)}
    =c(\phi,V)\cdot\bar\eta+\partial_{\phi}^t\delta_{[V,\overline V]}\cdot\bar\eta,
\end{equation*}
where $c(\phi,V)$ is short for $c_{1\bar 1}(\phi,V)$. It suffices to show that
\begin{equation}\label{eq:bdy3}
    \langle\langle \partial_{\phi}^t\delta_{[V,\overline V]}\cdot\bar\eta, \cdot\bar\zeta \rangle\rangle
    =\int_{\partial D_t} b(\rho) \langle \cdot\bar\eta, \cdot\bar\zeta\rangle d \sigma,
\end{equation}
where $b(\rho)$ is short for $b_{1\bar1}(\rho)$. Notice that the left hand side of the above equality is equal to
\begin{equation*}
    i^{n^2}\int_{\partial D_t}\{\delta_{[V,\overline V]}\cdot\bar\eta, \cdot\bar\zeta\}
    =\int_{\partial D_t}\frac{\sum \left(V\overline{v^\alpha}\right)\rho_{\bar \alpha}}{|\partial \rho|}\langle \cdot\bar\eta,\cdot\bar\zeta\rangle d\sigma.
\end{equation*}
Since
\begin{equation*}
    \langle V, V\rangle_{i\partial\op\rho}-\sum \left(V\overline{v^\alpha}\right)\rho_{\bar\alpha}=V\overline V\rho
\end{equation*}
and $V\overline V\rho=0$ on the boundary of $D_t$, we get \eqref{eq:bdy3}. The proof is complete.
\end{proof}

We remark that the only property of the vector field $V_j^{\psi}$ used in the proof of the above lemma is $V_j^{\psi}\rho=0$ on the boundary. Thus the above lemma is true for every $V_j\in\mathcal V_j$.

Notice that if every fiber is one-dimensional then every vector field in $\mathcal V_j$ can be used to computed the variation. What's more,
\begin{equation*}
    \frac{\langle V_j, V_k\rangle_{i\partial\dbar\rho}}{|\partial\rho|} \equiv
    \frac{ \rho_{j\bar k} |\rho_\mu|^2 -\rho_{j\bar{\mu}} \rho_{\bar k} \rho_{\mu}
    -\rho_{\bar{k}\mu} \rho_{j} \rho_{\bar{\mu}} + \rho_{j} \rho_{\bar{k}} \rho_{\mu\bar{\mu}}}{|\rho_\mu|^3}
\end{equation*}
does not depend not $V_j\in \mathcal V_j$, $V_k\in \mathcal V_k$. Thus we have

\begin{theorem}\label{th:planar} Let $\{D_t\}_{t\in\mathbb B}$ be a smooth family of smoothly bounded planar domains. If $\phi\equiv0$ on $D$ then we have
\begin{equation*}
 K_{j\bar k}(\zeta,\bar\eta) = \int_{\partial D_t} b_{j\bar k}(\rho) \langle \cdot\bar\eta, \cdot\bar\zeta\rangle d \sigma +\langle\langle \cdot\bar\eta_{\bar k}, \cdot\bar\zeta_{\bar j}\rangle\rangle + \langle\langle {_j\bar\eta}, {_k\bar\zeta}\rangle\rangle
\end{equation*}
where ${_j\bar\eta}$ is the harmonic part of $\delta_{\op^t V_j}\cdot\bar\eta$.
\end{theorem}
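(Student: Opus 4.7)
The plan is to rerun the second-order variation argument behind Theorem~\ref{th:Bergman0}, but to verify that in fibre dimension one every step succeeds for an arbitrary $V_j\in\mathcal V_j$, not only for the quasi-K\"ahler--Einstein lift $V_j^{-\log-\rho}$ produced by the Key Lemma. I would start from the general identity \eqref{eq:variation-second}. As the remark following its proof records, Lemma~\ref{le:bdy} uses only $V_j(\rho)=0$ on $\partial D$, so it applies to any $V_j\in\mathcal V_j$, $V_k\in\mathcal V_k$ and yields the boundary term $\int_{\partial D_t}b_{j\bar k}(\rho)\langle\cdot\bar\eta,\cdot\bar\zeta\rangle\,d\sigma$; the explicit closed form for $b_{j\bar k}(\rho)$ displayed just before the theorem confirms that this integrand depends only on $\rho$ and not on the particular choice of lifts.

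The new point is to evaluate $I_3=-\langle\langle{_j\bar\eta}^{n,0},{_k\bar\zeta}^{n,0}\rangle\rangle-i^{n^2}\int_{D_t}\{{_j\bar\eta}^{n-1,1},{_k\bar\zeta}^{n-1,1}\}$. When $n=1$ the form ${_j\bar\eta}^{n-1,1}=\delta_{\op^tV_j}\cdot\bar\eta$ is a $(0,1)$-form on a Riemann surface, hence automatically $\op^t$-closed and automatically primitive with respect to the complete K\"ahler metric $\omega^t=i\partial^t\op^t(-\log-\rho)$; the obstruction discussed around \eqref{eq:obstruction} therefore disappears, which is exactly why the choice of $V_j$ is immaterial. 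Because $\phi\equiv0$, the inhomogeneous term in \eqref{eq:n0-n11} vanishes, and ${_j\bar\eta}^{n,0}$ is an $L^2$-minimal solution of $\op^t(\cdot)=-\partial^t({_j\bar\eta}^{n-1,1})$ (the required orthogonality to $\mathcal H_t$ is just $L_j\cdot\bar\eta\perp\mathcal H_t$, already used in deriving \eqref{eq:variation-second}). These are exactly the hypotheses of Lemma~\ref{le:L2-1}, whose polarized form~\eqref{eq:L2-1} delivers
\begin{equation*}
\langle\langle{_j\bar\eta}^{n,0},{_k\bar\zeta}^{n,0}\rangle\rangle=\langle\langle{_j\bar\eta}^{n-1,1},{_k\bar\zeta}^{n-1,1}\rangle\rangle-\langle\langle{_j\bar\eta},{_k\bar\zeta}\rangle\rangle.
\end{equation*}
Since $-i^{n^2}\int\{\cdot,\cdot\}=\langle\langle\cdot,\cdot\rangle\rangle$ on primitive $(n-1,1)$-forms, the two pairings of $(n-1,1)$-parts cancel and one is left with $I_3=\langle\langle{_j\bar\eta},{_k\bar\zeta}\rangle\rangle$, which combined with \eqref{eq:variation-second} and the boundary computation yields the claimed identity.

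The step that most deserves attention is confirming that the harmonic projection ${_j\bar\eta}$ does not depend on the chosen lift, otherwise the right-hand side would be ambiguous. If $V_j,V_j'\in\mathcal V_j$ differ by a vertical $(1,0)$-vector field $W$, then, using $\op^t\cdot\bar\eta=0$ together with the contraction identity for $\op^t$ on a $(1,0)$-vector field paired with an $(n,0)$-form, one finds
\begin{equation*}
\delta_{\op^tV_j}\cdot\bar\eta-\delta_{\op^tV_j'}\cdot\bar\eta=\delta_{\op^tW}\cdot\bar\eta=\op^t(\delta_W\cdot\bar\eta),
\end{equation*}
so the two representatives of the Kodaira--Spencer class differ by a $\op^t$-exact form and share the same $\square'$-harmonic part. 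The principal conceptual obstacle of Theorem~\ref{th:Bergman0}, namely engineering a primitive representative via the Key Lemma, is simply bypassed here by the dimension of the fibre.
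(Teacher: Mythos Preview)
Your argument is correct and follows the same route as the paper. The paper's own justification of Theorem~\ref{th:planar} is the short paragraph preceding it: in fibre dimension one every $(0,1)$-form is automatically primitive and $\dbar^t$-closed, so the proof of Theorem~\ref{th:Bergman0} goes through for any $V_j\in\mathcal V_j$, and the boundary density $\langle V_j,V_k\rangle_{i\partial\dbar\rho}/|\partial\rho|$ is independent of the chosen lifts. You have simply unpacked these two sentences, invoking the remark after Lemma~\ref{le:bdy}, equation~\eqref{eq:n0-n11} with $\phi\equiv0$, and the polarized identity~\eqref{eq:L2-1}, exactly as the paper intends. Your extra paragraph checking that the harmonic part ${_j\bar\eta}$ is independent of the lift is a welcome consistency check that the paper leaves implicit; note that since any two lifts in $\mathcal V_j$ agree on $\partial D_t$ when $n=1$ (because $W\rho=w\rho_\mu=0$ forces $w=0$ there), the primitive $\delta_W\cdot\bar\eta$ vanishes on the boundary and is $L^2$ for the complete metric $\omega^t$, so the $\dbar^t$-exact difference is genuinely orthogonal to the harmonic space.
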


We shall use the above theorem to study triviality of holomorphic motions.

\subsection{Bergman kernel and curvature property}

We shall prove Corollary~\ref{co:nakano}, Theorem~\ref{th:Bergman1} and Theorem~\ref{th:Bern} in this section.

Let $\pi:D\to \mathbb B$ be a proper holomorphic submersion. Let's recall the definition of Nakano positivity for holomorphic vector bundle $\{\mathcal H_t\}$ associated to $0$-th direct image $\pi_*(K_{D/\mathbb B}) $
(we assume that $\dim_\C\mathcal H_t$ is a constant $r$). Thus $\mathcal H_t$ can be seen as the Bergman space of the fibre at $t$. By definition, $\{\mathcal H_t\}$ is said to be Nakano semi-positive if for every $u^1,\cdots,u^m\in\mathcal H_t$,
\begin{equation}\label{eq:Def-Nakano}
    \sum\langle\langle\Theta_{j\bar k}u^j,u^k\rangle\rangle \geq 0,
\end{equation}
where $\Theta_{j\bar k}$ is the curvature of the Chern connection on $\{\mathcal H_t\}$. We may choose $\cdot\bar\eta_p$, $p=1,\cdots,r$, such that $\mathcal H_t={\rm Span}\{\cdot\bar\eta_p\}$. Thus every $u^j$ can be written as  $u^j=\sum c^{jp}\cdot\bar\eta_p$. Hence \eqref{eq:Def-Nakano} is equivalent to
\begin{equation}\label{eq:Def-Nakano-Bergman}
    \sum c^{jp}\overline{c^{kq}}\langle\langle\Theta_{j\bar k}\cdot\bar\eta_p,\cdot\bar\eta_q\rangle\rangle \geq 0.
\end{equation}
Denote by $D_j$ the contraction of $\partial/\partial t^j$ with $(1,0)$-component of the Chern connection on $\{\mathcal H_t\}$. By definition of the Chern connection, $D_j\cdot\bar\eta_p$ is the Bergman projection of $L_j\cdot\bar\eta_p$. By \eqref{eq:reproducing-first}, $L_j\cdot\bar\eta_p~\bot~\mathcal H_t$. Thus $D_j\cdot\bar\eta_p=0$. Since $\Theta_{j\bar k}=[D_j,\partial/\partial \bar t^k]$, we have
\begin{equation*}
    \langle\langle\Theta_{j\bar k}\cdot\bar\eta_p,\cdot\bar\eta_q\rangle\rangle=K_{j\bar k p\bar q}.
\end{equation*}
Thus we get the following lemma:

\begin{lemma}\label{le:Nakano-Bergman} Nakano semi-positivity of $\{\mathcal H_t\}$ is equivalent to \eqref{eq:Bergman-Nakano}.
\end{lemma}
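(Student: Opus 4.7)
The plan is to expand the Nakano condition in a frame of Bergman reproducing kernels and then match its matrix entries with $K_{j\bar kp\bar q}$. Since Nakano semi-positivity is a pointwise condition at each $t_0\in\B$, after fixing $r$ points $\eta_1,\ldots,\eta_r\in D_{t_0}$ (with $r=\dim_\C\mathcal H_{t_0}$) such that $\{\cdot\bar\eta_p\}$ forms a basis of $\mathcal H_{t_0}$, Lemma~\ref{le:stability} combined with the continuity of the Gram determinant $\det(K^t(\eta_q,\bar\eta_p))$ shows that $\{\cdot\bar\eta_p\}$ remains a smooth frame for $\{\mathcal H_t\}$ over a neighborhood of $t_0$. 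Writing an arbitrary smooth local section as $u^j=\sum_p c^{jp}\cdot\bar\eta_p$, the defining inequality $\sum\langle\langle\Theta_{j\bar k}u^j,u^k\rangle\rangle\geq 0$ of Nakano semi-positivity becomes precisely \eqref{eq:Bergman-Nakano} once we verify $\langle\langle\Theta_{j\bar k}\cdot\bar\eta_p,\cdot\bar\eta_q\rangle\rangle=K_{j\bar kp\bar q}$.

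Next I would establish $D_j\cdot\bar\eta_p=0$. By definition of the Chern connection on $\{\mathcal H_t\}$, $D_j\cdot\bar\eta_p$ is the orthogonal projection of $L_j\cdot\bar\eta_p$ onto $\mathcal H_t$; the identity $L_{\bar k}\cdot\bar\zeta=\cdot\bar\zeta_{\bar k}$ together with \eqref{eq:reproducing-first}, applied via $\partial_j\langle\langle\cdot\bar\eta,\cdot\bar\zeta\rangle\rangle=\langle\langle L_j\cdot\bar\eta,\cdot\bar\zeta\rangle\rangle+\langle\langle\cdot\bar\eta,\cdot\bar\zeta_{\bar j}\rangle\rangle=\langle\langle L_j\cdot\bar\eta,\cdot\bar\zeta\rangle\rangle+K_j(\zeta,\bar\eta)$, forces $\langle\langle L_j\cdot\bar\eta,\cdot\bar\zeta\rangle\rangle=0$ for every $\zeta$. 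Hence $L_j\cdot\bar\eta_p\bot\mathcal H_t$ and the projection vanishes. This observation is already implicit in the derivation of \eqref{eq:variation-second}.

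With $D_j\cdot\bar\eta_p=0$ in hand, the commutator formula $\Theta_{j\bar k}=[D_j,\partial/\partial\bar t^k]$ reduces on the frame to $\Theta_{j\bar k}\cdot\bar\eta_p=D_j\cdot\bar\eta_{p,\bar k}$. Pairing with $\cdot\bar\eta_q$ and exploiting the metric compatibility of the Chern connection yields
\begin{equation*}
\langle\langle\Theta_{j\bar k}\cdot\bar\eta_p,\cdot\bar\eta_q\rangle\rangle
=\frac{\partial}{\partial t^j}\langle\langle\cdot\bar\eta_{p,\bar k},\cdot\bar\eta_q\rangle\rangle
-\langle\langle\cdot\bar\eta_{p,\bar k},\cdot\bar\eta_{q,\bar j}\rangle\rangle.
\end{equation*}
By \eqref{eq:reproducing-first} the first term on the right equals $\partial K_{\bar k}(\eta_q,\bar\eta_p)/\partial t^j=K_{j\bar k}(\bar\eta_q,\bar\eta_p)$, so the right-hand side is exactly $K_{j\bar kp\bar q}$ as defined just before the lemma. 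This completes the identification of the curvature matrix with the Bergman-kernel derivatives and therefore the equivalence.

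The step requiring the most care is the first: certifying that point evaluations at generically chosen $\eta_1,\ldots,\eta_r$ yield a basis of $\mathcal H_t$ on a full neighborhood of $t_0$, which needs Lemma~\ref{le:stability} plus the standard observation that the non-vanishing of a Gram determinant is an open condition. Once a smooth Bergman-kernel frame is in place, the algebra in the remaining steps is elementary and relies only on \eqref{eq:reproducing-first} and the definition of the Chern connection; no $L^2$-estimates or boundary integrals are needed.
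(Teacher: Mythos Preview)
Your proposal is correct and follows essentially the same route as the paper: choose Bergman kernels $\cdot\bar\eta_p$ spanning $\mathcal H_t$, observe via \eqref{eq:reproducing-first} that $L_j\cdot\bar\eta_p\perp\mathcal H_t$ so $D_j\cdot\bar\eta_p=0$, and then unwind $\Theta_{j\bar k}=[D_j,\partial/\partial\bar t^k]$ to obtain $\langle\langle\Theta_{j\bar k}\cdot\bar\eta_p,\cdot\bar\eta_q\rangle\rangle=K_{j\bar kp\bar q}$. Your additional care with Lemma~\ref{le:stability} and the Gram determinant to secure a local frame, and your explicit use of metric compatibility in the last step, only make the argument more transparent than the paper's terse version.
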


In case $\dim_\C\mathcal H_t=\infty$, $\{\mathcal H_t\}$ is said to be Nakano semi-positive if \eqref{eq:Bergman-Nakano} is true for every positive integer $r$. Thus Corollary~\ref{co:nakano} is a direct corollary of Lemma~\ref{le:bdy} and Theorem~\ref{th:Bergman0}.

\begin{proof}[Proof of Theorem~\ref{th:Bergman1}]
Since $D$ is pseudoconvex and $\phi$ is plurisubharmonic, by Lemma~\ref{le:bdy}, we have
\begin{equation*}
    \sum c^{jp}\overline{c^{kq}}\langle\langle [L_j,L_{\bar k}]\cdot\bar\eta_p,\cdot\bar\eta_q\rangle\rangle
    \geq ||c||^2_{i\partial^t\dbar^t\phi},
\end{equation*}
where
\begin{equation*}
    c:=\sum c^{jp}(\dbar^t\phi)_{V_j^\psi}\wedge\cdot\bar\eta_p.
\end{equation*}
The last term in \eqref{eq:variation-second} can be written as $||b||^2-||a||^2$, where
\begin{equation*}
    b=\sum c^{jp}{_j\bar\eta_p^{n-1,1}}, \ \ a=-\sum c^{jp}{_j\bar\eta_p^{n,0}}.
\end{equation*}
Thus if suffices to show that $||b||^2-||a||^2+||c||^2_{i\partial^t\dbar^t\phi}\geq 0$. By \eqref{eq:n0-n11},
$\dbar^t a=\partial^t_{\phi}b+c$. Thus Theorem~\ref{th:Bergman1} follows from Lemma~\ref{le:L2-2}.
\end{proof}

\begin{proof}[Proof of Theorem~\ref{th:Bern}] We may assume $m=1$. Put
\begin{equation*}
    P_f(z)=\int_{D_t}K^t(z,\bar w)\overline{f(t,w)}.
\end{equation*}
We claim that $D_tP_f:=e^\phi\partial/\partial t(P_fe^{-\phi})$ is perpendicular to the Bergman space: It suffices to show that $\langle\langle h, D_t(P_f)\rangle\rangle=0$ for every function $h$ holomorphic on a neighborhood of the closure of $D_t$. Thus $h$ can be seen as a holomorphic function on nearby fibres and
\begin{equation*}
    0=\partial/\partial \bar t\int_{D_t}h f=\partial/\partial \bar t\int_{D_t} h \overline{P_f}e^{-\phi}=\langle\langle h, D_tP_f\rangle\rangle.
\end{equation*}
Our claim is proved.

Since $K_f(t)=||P_f||^2$, we have
\begin{equation*}
    K_{f,t\bar t}=||P_{f,\bar t}||^2+\langle\langle \phi_{t\bar t}P_f,P_f\rangle\rangle-||D_tP_f||^2.
\end{equation*}
Notice that $\dbar^t(D_tP_f)=-P_f\dbar^t(\phi_t)$. By Lemma~\ref{le:L2-2},
\begin{equation*}
    ||D_tP_f||^2\leq \langle\langle|\dbar^t\phi_t|^2_{i\partial^t\dbar^t\phi}P_f,P_f \rangle\rangle.
\end{equation*}
Thus
\begin{equation*}
    K_{f,t\bar t}\geq||P_{f,\bar t}||^2+\langle\langle c(\phi)P_f,P_f\rangle\rangle,
\end{equation*}
which implies that
\begin{equation*}
    (\log K_f)_{t\bar t}\geq\frac{\langle\langle c(\phi)P_f,P_f\rangle\rangle}{||P_f||^2}\geq 0.
\end{equation*}
The proof is complete.
\end{proof}

Now we shall use Berndtsson's approximation technique (see section 3 in \cite{Bern06}) to prove the remark behind Theorem~\ref{th:Bern}.

\begin{proof}[Proof of the the remark behind Theorem~\ref{th:Bern}] Notice that $K_f$ satisfies the foolowing extremal property:
\begin{equation*}
    K_f(t)=\sup_{h\in \mathcal H_t}\{|\int_{D_t}hf|^2/||h||^2\}.
\end{equation*}
Since the Bergman kernel associated to $\phi$ and $D$ is a decreasing limit of the Bergman kernel associated to smooth weight and smooth strictly pseudoconvex domain. We know that $\log K_f$ is plurisubharmonic on $\mathbb B$ for general $\phi$ and $D$.
\end{proof} 

\section{Applications to holomorphic motions}

We shall prove Theorem~\ref{th:Cflat} in this section. Let $D_0$ be a smoothly bounded planar domain. Let $F$ be a holomorphic motion of $D_0$. If $F$ is smooth up to the boundary, then the vector field $V^F_j$ defined by $V^F_j:=F_*\left(\frac{\partial}{\partial t^j}\right)$ is tangent to the boundary (i.e., $V^F_j\in\mathcal V_j$). Thus Theorem~\ref{th:planar} applies. Since the graph $D$ of $F$ is Levi-flat, we have
\begin{equation*}
\sum c^{jp}\overline{c^{kq}}\int_{\partial D_t} b_{j\bar k}(\rho) \langle \cdot\bar\eta_{p}, \cdot\bar\eta_q\rangle d \sigma=0,
\end{equation*}
which implies that
\begin{equation}\label{eq:holo-motion-cur}
\sum c^{jp}\overline{c^{kq}}K_{j\bar kp\bar q}=||\sum c^{jp}{_j\bar\eta_p}||^2.
\end{equation}

\begin{proof}[Proof of Theorem~\ref{th:Cflat}] (i) $\Rightarrow$ (ii): If $F$ is trivial then there exists a bi-holomorphic mapping $G$ with the same fibres. Thus $V^G_j$ are holomorphic. Hence ${_j\bar\eta_p}\equiv0$. Then \eqref{eq:holo-motion-cur} implies (ii).

(ii) $\Rightarrow$ (iii): If (ii) is true then ${_j\bar\eta}\equiv0$ by \eqref{eq:holo-motion-cur}. Thus
$\delta_{\op^t V^F_j}\cdot\bar\eta\perp\ker\partial^t$. Since $d\bar\zeta\in\ker\partial^t$, we have
\begin{equation}\label{eq:step2}
    \int_{D_t}f_{j\bar\zeta}K^t(\zeta,\bar\eta)\ id\zeta\wedge d\bar\zeta=0.
\end{equation}
Notice that
\begin{equation*}
    \overline{z_{\zeta}}=\frac{f_z}{|f_z|^2-|f_{\bar z}|^2}, \ z_{\bar\zeta}=\frac{-f_{\bar z}}{|f_z|^2-|f_{\bar z}|^2},
\end{equation*}
we have
\begin{equation}\label{eq:abz}
    f_{j\bar\zeta}=f_{jz}z_{\bar\zeta}+f_{j\bar z}\overline{z_{\zeta}}=\frac{(f_z)^2J_j}{|f_z|^2(1-|J|^2)}.
\end{equation}
Thus (iii) is true.

(iii) $\Rightarrow$ (i): It suffices to find holomorphic vector fields $V_j$, $j=1,\cdots,m$, on $D$ such that $V_j=V_j^F$ on $\partial D$ and $[V_j,V_k]=0$ on $D$ . By \eqref{eq:step2}, there exits $g^{t,j}$ such that
\begin{equation}\label{eq:step31}
    \op^t f_j=(\partial^t)^*(g^{t,j} id\zeta\wedge d\bar\zeta)=-i\op^tg^{t,j}.
\end{equation}
Take $g^j$ such that $g^j|_{D_t}=g^{t,j}$. We claim that
\begin{equation*}
    V_j=\partial/\partial t^j+(f_j+ig^j)\partial/\partial \zeta, \ \ j=1,\cdots,m,
\end{equation*}
fit our needs. Since $g^{t,j} id\zeta\wedge d\bar\zeta\in{\rm Dom}(\partial^t)^*$, we have $g^j=0$ on $\partial D$ (i.e., $V_j=V_j^F$ on $\partial D$). Thus it suffices to prove that $V_j$ are holomorphic and integrable.

By \eqref{eq:step31}, $f_{j\bar k}+ig^j_{\bar k}$ are holomorphic on each fibre. To prove $f_{j\bar k}+ig^j_{\bar k}=0$ on each fibre, it suffices to show that $f_{j\bar k}+ig^j_{\bar k}=0$ on the boundary of each fibre.

Since $g^j=0$ on $\partial D$, we have
\begin{equation}\label{eq:step32}
    V_k^F g^j=g^j_k+f_kg^j_\zeta=0, \ \ \text{on} \  \partial D,
\end{equation}
and
\begin{equation}\label{eq:step33}
    \overline{V_k^F} g^j=g^j_{\bar k}+\overline{f_k}g^j_{\bar \zeta}=0, \ \ \text{on} \  \partial D.
\end{equation}
By definition of $V_j^F$, we have $[V_j^F,V_k^F]=0$ and $[V_j^F,\overline{V_k^F}]=0$. Thus
\begin{equation}\label{eq:step34}
    f_jf_{k\zeta}-f_kf_{j\zeta}=0, \ \ f_{j\bar k}+\overline{f_k}f_{j\bar \zeta}=0, \ \ \text{on} \  D.
\end{equation}
By \eqref{eq:step31}, \eqref{eq:step33} and \eqref{eq:step34}, we have
\begin{equation*}
    0=\overline{f_k}f_{j\bar \zeta}+i\overline{f_k}g^j_{\bar \zeta}=- f_{j\bar k}-ig^j_{\bar k},\ \ \text{on} \  \partial D.
\end{equation*}
Thus $f_{j\bar k}+ig^j_{\bar k}\equiv0$ on $D$. By \eqref{eq:step31}, $V_j$ are holomorphic on $D$.

Now we need to show that $[V_j,V_k]=0$ on $D$. Since $[V_j,V_k]$ are holomorphic, it suffices to show that $[V_j,V_k]=0$ on $\partial D$. Since $g^j=0$ on $\partial D$, we have
\begin{equation*}
    [V_j,V_k]=f_jf_{k\zeta}-f_kf_{j\zeta}+i(f_jg^k_\zeta+g^k_j)-i(f_kg^j_\zeta+g^j_k), \ \ \text{on} \  \partial D.
\end{equation*}
By \eqref{eq:step32} and \eqref{eq:step34}, $[V_j,V_k]=0$ on $\partial D$. Thus $V_j$ are integrable and our claim is proved. The proof is complete.
\end{proof}

\begin{proof}[Proof of Corollary~\ref{co:last}] By definition of $F$, (iii) is equivalent to $a_j(t)\equiv0$. Since $a(0)=0$, (iii) is equivalent to $a\equiv0$. Thus Corollary~\ref{co:last} follows from Theorem~\ref{th:Cflat}.
\end{proof}

\end{document}